\theoremstyle{plain}
\numberwithin{equation}{section}
\newtheorem{thm}{Theorem}[section]
\newtheorem{theorem}[thm]{Theorem}
\newtheorem{prop}[thm]{Proposition}
\newtheorem{proposition}[thm]{Proposition}
\theoremstyle{definition}
\newtheorem{remark}[thm]{Remark}
\title[Constructing BLFs from handle decompositions]{Constructing broken Lefschetz fibrations from handle decompositions}
\author{Mark C. Hughes}
\begin{document}
\setstcolor{red}

\maketitle

\begin{abstract} 
We present an approach to constructing broken Lefschetz fibrations (BLFs) $f:X\rightarrow S^2$ from a handle decomposition of a 4-manifold $X$.  Given a handle decomposition as input these techniques yield explicit descriptions of the BLFs, and do not rely on classification results from contact topology or the choice of a generic indefinite function like some earlier approaches.  We show that this approach will always work in the case when $X$ is the double of a 4-manifold with boundary, and compute explicit examples of BLFs on connected sums of $S^2$-bundles.  
\end{abstract}

\section{Introduction}

The topic of broken Lefschetz fibrations on smooth 4-manifolds has experienced a great deal of attention in recent years, since their introduction by Auroux, Donaldson, and Katzarkov \cite{AurouxDonaldsonKatzarkov}.  Here, a broken Lefschetz fibration (or BLF) is a smooth surjective maps $f:X\rightarrow \Sigma$ between a smooth compact 4-manifold $X$ and a compact surface $\Sigma$, with only certain types of allowable critical points (see Section~\ref{sec:BLF} for a precise definition).  These singular fibrations share a relationship to near-symplectic structures on 4-manifolds that is analogous to the relationship established by Donaldson and Gompf \cite{Donaldson,GompfandStipsicz} between honest Lefschetz fibrations and symplectic structures.  One key difference, however, is that while Lefschetz pencils exist only on symplectic 4-manifolds, BLFs were also known to exist on smooth 4-manifolds which do not admit any near-symplectic structures.  Indeed, \cite{AurouxDonaldsonKatzarkov} contains the construction of a BLF on $S^4$, which immediately dispelled any hope that the existence of a broken Lefschetz fibration might imply the existence of a near-symplectic structure. 

As there were no examples of 4-manifolds known not to admit any BLFs, the question of whether every smooth, closed 4-manifold admits a BLF was raised \cite{GayandKirby}.  This question was answered in the affirmative independently by Akbulut and Karakurt \cite{AkbulutandKarakurt}, Baykur \cite{Baykur}, and Lekili \cite{Lekili}.  Akbulut and Karakurt's approach built on earlier work by Gay and Kirby \cite{GayandKirby}, and involved cutting the manifold of interest $X$ into two pieces, on which the desired fibration was constructed separately.  Modifications were then made to these fibrations to ensure that they matched along their common boundary.  This process appeals to Giroux's correspondence between open book decompositions of 3-manifolds and contact structures, as well as Eliashberg's classification of overtwisted contact structures.  Baykur and Lekili's approaches each involved modifying a generic indefinite surjective map $X \rightarrow S^2$ near its critical points to obtain a broken Lefschetz fibration $f:X\rightarrow S^2$.

Since each of these general approaches involves either the use of deep classification results from contact topology or the choice of a generic indefinite map, they are not well-suited to dealing with manifolds presented by handlebody descriptions.  Indeed, while a BLF $f:X\rightarrow S^2$ allows one to construct a (non-unique) handle decomposition of the 4-manifold $X$, there are currently no general approaches to constructing an explicit BLF from a given handlebody structure on $X$ (though many explicit examples of BLFs have been found and studied, see e.g. \cite{Baykur2009}).

The purpose of this paper is to present an approach to explicitly constructing BLFs directly from a given handle diagram of a smooth 4-manifold.  These techniques combine the author's work on braided surfaces with caps \cite{Hughes2015} with earlier work by Loi and Piergallini \cite{LoiandPiergallini}.  There the authors construct Lefschetz fibrations on 2-handlebodies (4-manifolds with boundary that have handle decompositions with only 0, 1, and 2-handles), by expressing $X$ as a the total space of a covering $h:X\rightarrow D^4$, branched over a ribbon surface in $D^4$.  The branch locus of this covering is then braided using Rudolph's algorithm \cite{Rudolph1983}, and the resulting map $h':X\rightarrow D^4$ is composed with a suitable projection $\text{pr}:D^4 \rightarrow D^2$ to give a Lefschetz fibration $\text{pr}\circ h' : X \rightarrow D^2$.  These techniques were naturally limited to 2-handlebodies, since higher index handles result in nonribbon branch loci which cannot be braided using Rudolph's algorithm.  

We overcome these difficulties by instead using braided surfaces with caps, which are not required to be ribbon.  Furthermore, because we can specify the boundary of a braided surface with caps, we can combine our construction with techniques of Gay and Kirby to obtain broken Lefschetz fibrations on many closed 4-manifolds.  As an example we show that our approach will produce BLFs on 4-manifolds which are the doubles of 4-manifolds with boundary, and compute explicit examples for connect sums of $S^2$-bundles.

\section{Broken Lefschetz fibrations}
\label{sec:BLF}

This section contains definitions of the various fibrations structures that we will be concerned with on 3 and 4-manifolds, as well as a brief discussions about their topology.

\subsection{Open book decompositions of 3-manifolds}
Let $M$ be a 3-dimensional closed smooth oriented manifold.  An \emph{open book decomposition} on $M$ is a smooth map $\lambda: M \rightarrow D^2$ such that $\lambda^{-1} (\partial D^2)$ is a compact 3-dimensional submanifold on which $\lambda$ restricts as a surface bundle over $S^1 = \partial D^2$.  Furthermore, we require that the closure of $\lambda^{-1}(\text{int }D^2)$ is the disjoint union of solid tori, on which $\lambda$ is the projection $D^2 \times S^1 \rightarrow D^2$.  We say that $\lambda^{-1}(0)$ is the \emph{binding} of the open book on $M$, and for any $p \in S^1$ the compact surface $\Sigma_p = \lambda^{-1}(\{\alpha \cdot p \text{ } | \text{ } 0 \leq \alpha \leq 1 \})$ is the \emph{page} over $p$.  The surface bundle structure on $\lambda^{-1} (\partial D^2)$ induces a monodromy map on the pages of $\lambda$. 

By a celebrated theorem of Giroux \cite{Giroux}, open book decompositions on a closed 3-manifold $M$ (up to a stabilization operation) are in one-to-one correspondence with contact structures on $M$ (up to isotopy).  Thus open book decompositions provide a useful topological setting in which to study contact structures on a given closed 3-manifold.   

\subsection{Singular fibrations on 4-manifolds}
Now let $X$ be a smooth 4-manifold and $\Sigma$ a compact surface, and let $f:X\rightarrow \Sigma$ be a smooth map.  A critical point of $f$ is called a \emph{Lefschetz critical point} if there are orientation preserving local complex coordinates on which $f:\mathbb{C}^2 \rightarrow \mathbb{C}$ is modeled as $f(u,v) = u^2 + v^2$.  If the coordinates around the critical point are instead orientation reversing, then it is called an \emph{anti-Lefschetz critical point}.  

An embedded circle $C \subset X$ of critical points of $f$ is called a \emph{round 1-handle singularity} or \emph{broken singularity} if $f$ is modeled near points of $C$ by the map $(\theta,x,y,z) \mapsto (\theta, x^2+y^2-z^2)$ from $\mathbb{R} \times \mathbb{R}^3 \rightarrow \mathbb{R} \times \mathbb{R}$, where $C$ is given locally by $x=y=z=0$.  

A surjective map $f:X \rightarrow \Sigma$ is called a \emph{Lefschetz fibration} if all critical points of $f$ are in the interior of $X$ and are Lefschetz critical points.  It is called an \emph{achiral Lefschetz fibration} if we also allow anti-Lefschetz critical points.  Finally, we add the adjective \emph{broken} to either of these names to indicate that we also allow round 1-handle singularities in the set of critical points of $f$.  When discussing these maps we will sometimes use the generic term \emph{fibration} to describe a map which can be any of the types defined above.

\subsection{Boundary behavior of fibrations}
\label{sec:BoundaryConditions}

Now suppose that $\partial X \neq \emptyset$ is connected, and that $f:X \rightarrow \Sigma$ is a fibration.  Then we say that $f$ is \emph{convex}, if
\begin{itemize}
\item $\Sigma = D^2$,
\item $f(\partial \Sigma) = D^2$, and
\item $f|_{\partial X} : \partial X \rightarrow D^2$ is an open book decomposition on $\partial X$.
\end{itemize}
We say that $f$ is \emph{concave} if there is a disk $D \subset \text{int }\Sigma$ with 
\begin{itemize}
\item $f(\partial X) = D$, and
\item $f|_{\partial X} : \partial X \rightarrow D$ is an open book decomposition on $\partial X$.
\end{itemize}
Finally, $f$ is said to be \emph{flat} if
\begin{itemize}
\item $f(\partial X) = \partial \Sigma$, and
\item $f|_{\partial X} : \partial X \rightarrow \partial \Sigma$ is a non-singular fiber bundle.
\end{itemize}
The fibers of a flat fibration are all closed surfaces, and the boundary $\partial X$ consists of the fibers  above $\partial \Sigma$.  The fibers of a convex fibration all have boundary, and $\partial X$ is comprised of the fibers above $\partial \Sigma = \partial D^2$, along with the boundaries of the fibers above $\text{int }D^2$.  In contrast, concave fibrations will have both closed fibers and fibers with boundary.  Indeed, the fibers above $\text{int }D \subset \Sigma$ will have boundary, while all other fibers will be closed.

Suppose now that $f_1 : X_1 \rightarrow \Sigma$ is a concave fibration, $f_2: X_2 \rightarrow D^2$ is a convex fibration, and that there is an orientation-reversing diffeomorphism $\phi :\partial X_1 \rightarrow \partial X_2$ which respects the open book decompositions.  Then $f_1$ and $f_2$ can be glued together, to give a fibration $f: X_1 \cup_{\phi} X_2 \rightarrow \Sigma$.  This gives a very useful method for constructing fibrations on closed 4-manifolds.  Indeed, one effective strategy is to divide the closed manifold $X$ into simpler pieces $X_1$ and $X_2$, on which convex and concave fibrations can be constructed.  In general these maps will induce different open book decompositions along their common boundary.  If, however, these fibrations can be modified so that they agree along $\partial X_1 = \partial X_2$, then they can be glued to give a fibration on all of $X$.  See \cite{AkbulutandKarakurt,EtnyreandFuller,GayandKirby} for approaches to matching these boundary fibrations which make use of Giroux's theorem and Eliashberg's classification of overtwisted contact structures.  In the case of closed 4-manifolds our approach will also involve splitting $X$ into two pieces, though we construct the convex fibration from the boundary inwards, so that it can be made to match a given concave fibration along its boundary.

\subsection{The topology of broken fibrations}
\label{sec:TopologyOfBrokenFibrations} 

The regular fibers of a flat or convex (achiral) Lefschetz fibration $f : X \rightarrow \Sigma$ will all be surfaces of the same diffeomorphism type, which we call the \emph{genus of} $f$.  Lefschetz fibrations of genus $g \geq 2$ can be determined entirely by their \emph{monodromy representations}.  Let $\Sigma^* \subset \Sigma$ denote the set of regular values of $f$, and let $p \in \Sigma \backslash \Sigma^*$ be a critical value.  If $\gamma \subset \Sigma^*$ is an oriented loop based at $q \in \Sigma^*$ which travels counterclockwise around $p$ and no other critical values, then a trivialization of the bundle over $\gamma$ induces a diffeomorphism of the fiber $F_q$ above $q$.  This diffeomorphism will be a right-handed (left-handed) Dehn twist if $p$ corresponds to a Lefschetz critical point (anti-Lefschetz critical point respectively).  The cycle along which this Dehn twist takes place is called the \emph{vanishing cycle} associated to the critical point.  As we approach the critical fiber $F_p$, the corresponding vanishing cycles in nearby regular fibers shrink down to a single transverse intersection in $F_p$ (see Figure~\ref{fig:LefschetzCriticalPoints} where the vanishing cycle is denoted with a dashed line).

\begin{figure}
 \centering
 \includegraphics[width=0.4\textwidth]{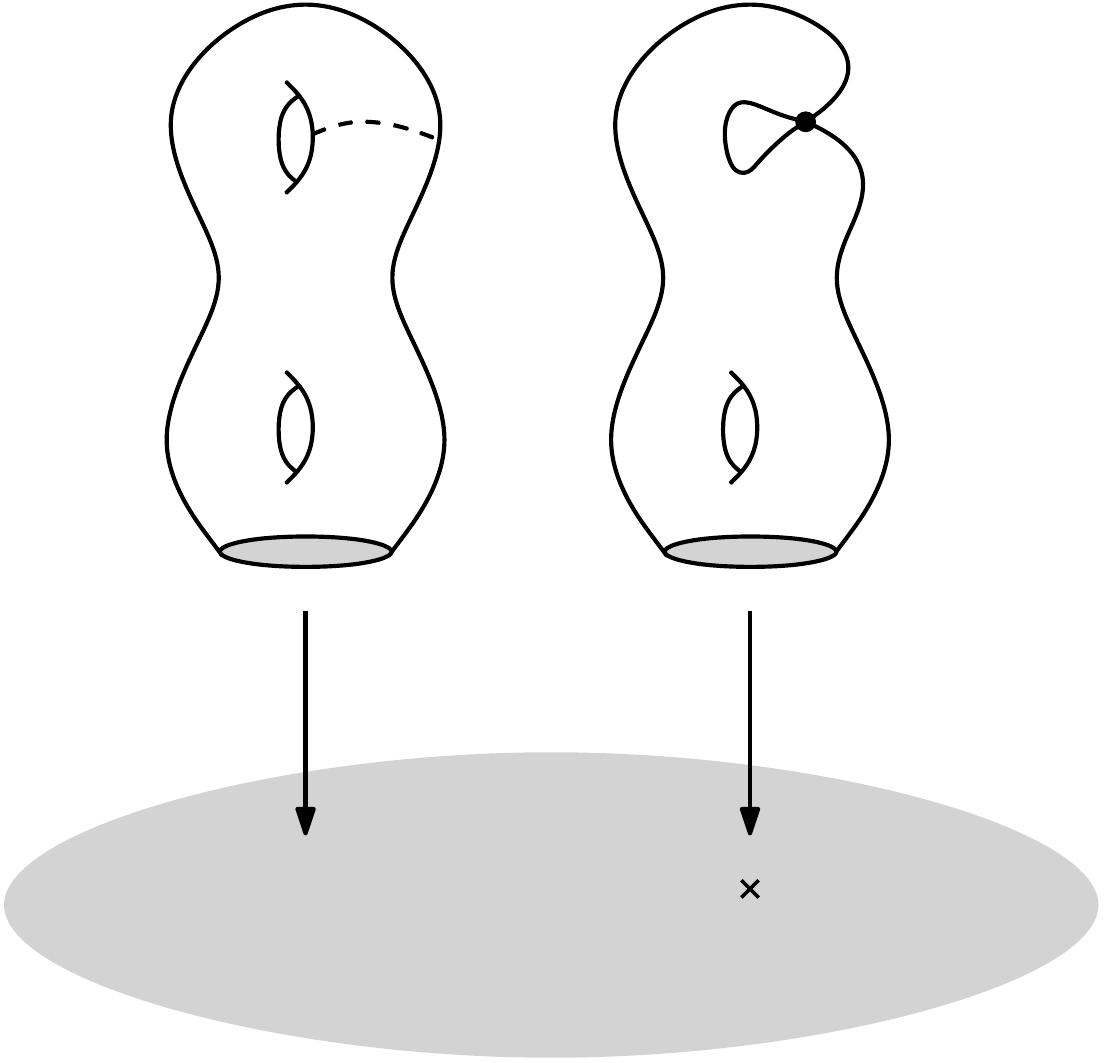}
  \caption{Vanishing cycle of Lefschetz critical point}
  \label{fig:LefschetzCriticalPoints}
\end{figure}

Now suppose that $f : X \rightarrow \Sigma$ is a broken fibration, with round 1-handle singularity along an embedded circle $C$.  Suppose that $C' \subset \Sigma$ is the image of $C$ under $f$, and that $C'$ is embedded.  Let $p$ and $q$ be nearby regular points sitting on opposite sides of $C'$.  Suppose for concreteness that $p=(\theta,-1)$ and $q=(\theta,1)$ for some $\theta \in S^1$ in the coordinate charts described above.  Then the fiber $F_q$ above $q$ can be obtained from $F_p$ by 0-surgery along a pair of points in $F_p$.  Equivalently, $F_p$ can be obtained from $F_q$ by 1-surgery along a simple closed curve (see Figure~\ref{fig:BrokenCriticalPoint}).  Indeed, we can think of the coordinate charts describing the round 1-handle singularity as defining an $S^1$ family of local Morse functions, each with a single index 1 critical point.  In particular, the genus of the fiber of a broken fibration changes by $\pm 1$ each time we cross the image of a round 1-handle singularity in $\Sigma$.

\begin{figure}
 \centering
 \includegraphics[width=0.4\textwidth]{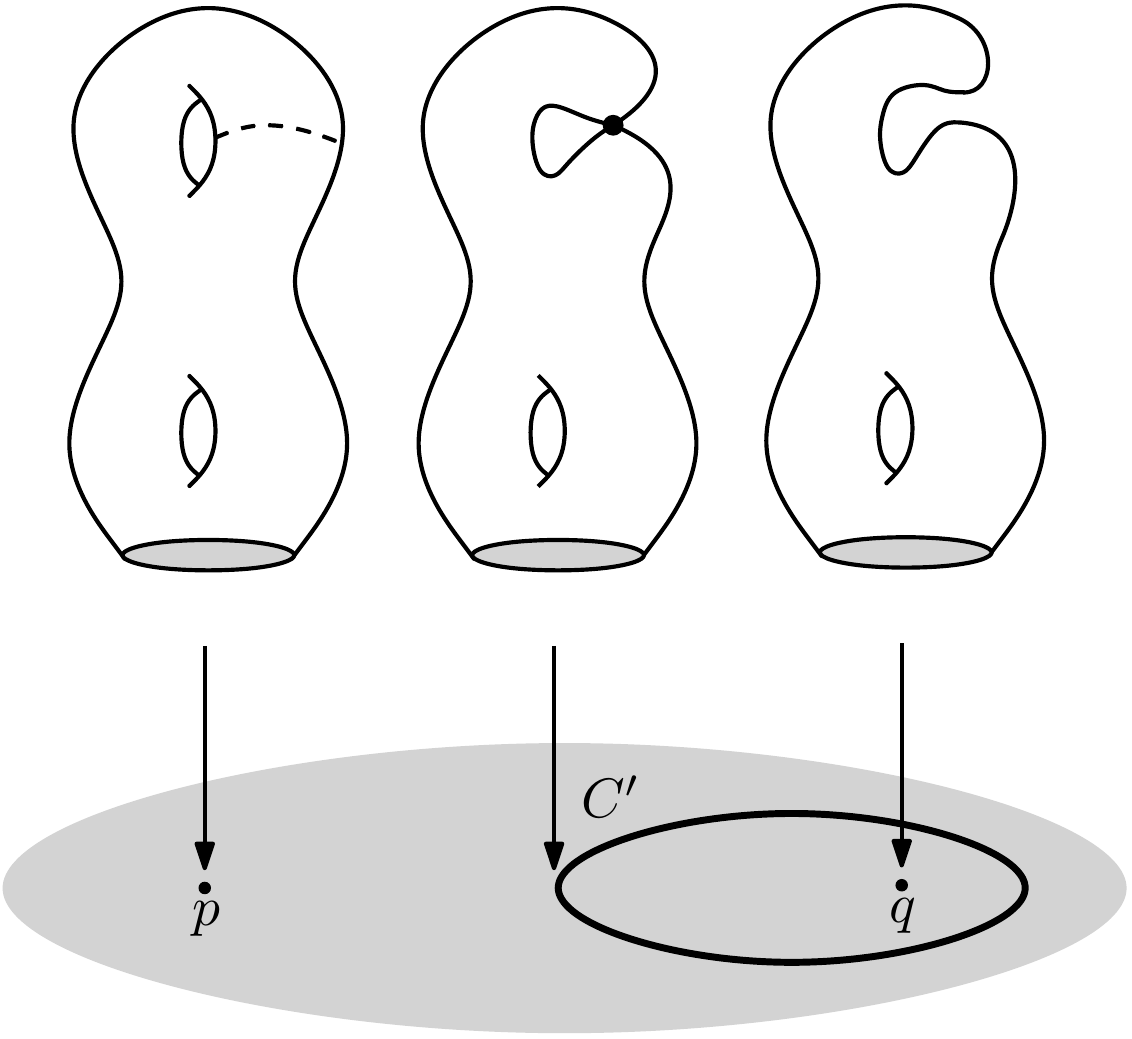}
  \caption{Passing a round 1-handle singularity}
  \label{fig:BrokenCriticalPoint}
\end{figure}

Now suppose that $f: X \rightarrow D^2$ is a Lefschetz fibration, possibly achiral, possibly broken.  Let $K$ be a framed knot in $f^{-1}(\partial D^2) \subset \partial X$, which can be isotoped so that it lies entirely on the interior of a single fiber.  Then we can attach a 2-handle along $K$ to yield a new manifold with boundary which we denote $X'$.  If we chose the framing along $K$ so that it is one less than the induced fiber framing, then $f$ will extend to a fibration on $X'$ with a new Lefschetz critical point in the newly added 2-handle.  If we instead choose $K$ to have framing one greater than the induced fiber framing, $f$ will instead extend to a fibration on $X'$ with an additional anti-Lefschetz critical point.

Suppose again that $f : X \rightarrow D^2$ is a fibration as above, but that we have now chosen two disjoint knots $K_1$ and $K_2$ in $\partial X$, each of which give a section of $f$ restricted to $f^{-1}(\partial D^2) \subset \partial X$.  Then we obtain a new manifold $X''$ by attaching $S^1 \times D^1 \times D^2$ to $\partial X$ along $K_1$ and $K_2$, by identifying $S^1 \times \{-1\} \times D^2$ and $S^1 \times \{1\} \times D^2$ with tubular neighborhoods of $K_1$ and $K_2$ respectively.  In this case the fibration $f$ will extend to $X''$, with a single round 1-handle singularity along $S^1 \times \{0\} \times \{0\}$.  Indeed, the knots $K_1$ and $K_2$ intersect each of the boundary fibers in a pair of points, which specify the locations of the 0-surgeries that take place as we pass the round 1-handle image.  Note that this also explains the choice of name for critical points of this type, as $S^1 \times D^1 \times D^2$ can be thought of as an $S^1$-family of 3-dimensional 1-handles $D^1 \times D^2$, which are attached to $X$ fiberwise along the boundary.  Alternatively, we can split $S^1 \times D^1 \times D^2$ into a 4-dimensional 1-handle and 2-handle pair, where the 2-handle runs over the 1-handle twice geometrically, but zero times algebraically.

The monodromy of the fibration outside this new round 1-handle singularity will depend on the framings of the tubular neighborhoods of $K_1$ and $K_2$, or alternatively, on the framing $k$ of the 2-handle in the 4-dimensional handle pair description.  Indeed, suppose that $F$ is the fiber of the fibration $f$ before attaching the round 1-handle, and that the monodromy around the boundary $\partial D^2$ is given by a map $\varphi:F \rightarrow F$.  Then adding the new round 1-handle changes the fibers along the boundary by replacing two disks $D_1$ and $D_2$ in $F$ with $S^1 \times [0,1]$.  The new monodromy will be given by the restriction of $\varphi$ to $F \backslash (D_1 \cup D_2)$, with $k$ Dehn twists along the cycle $S^1 \times \{\frac{1}{2}\}$.   

We will also sometimes refer to round 2-handles, which are the product of a 3-dimensional 2-handle with $S^1$.  These are, of course, just upside-down round 1-handles, and will not warrant any further discussion.

\section{Braided surfaces in $D^2 \times D^2$} 
\label{sec:BraidedSurfacesInD2xD2}

The construction of BLFs will be achieved by modifying certain coverings $h:X\rightarrow D^2\times D^2$, which are branched along surfaces in $D^2 \times D^2$.  To obtain a BLF, we will require that these branch loci are braided surfaces with caps in $D^2 \times D^2$, which we define here. 

\subsection{Braided Ribbon Surfaces}
\label{sec:braidedribbonsurfaces}
Rudolph defined a \emph{braided surface} \cite{Rudolph1983} to be a smooth properly embedded oriented surface $S \subset D^2 \times D^2$ on which the projection to the second factor $\mathrm{pr}_2:D^2 \times D^2 \rightarrow D^2$ restricts as a simple branched covering
.  Examples of these braided surfaces can be obtained by taking intersections of non-singular complex plane curves with 4-balls in $\mathbb{C}^2$, and they can be used to study the links that arise as their boundaries in $S^3 = \partial D^4$ (see e.g. \cite{Rudolph1985,Rudolph1993,Rudolph2005}).  

Let $S$ be a braided surface.  In a neighborhood of any branch point $p$ of the covering $\mathrm{pr}_2|_S$, there are local complex coordinates $u$ and $v$ on $D^2$ such that $S$ is given by the equation $u^2=v$, in the coordinates $(u,v)$ on $D^2 \times D^2$.  We say that $p$ is a \emph{positive} branch point if these coordinates can be taken to be orientation preserving, and a \emph{negative} branch point otherwise.   


One feature of Rudolph's braided surfaces are that they are all necessarily \emph{ribbon}.  A properly embedded surface $S$ in $D^4 = \{(z,w) : |z|^2+|w|^2 \leq 1\}$ is said to be \emph{ribbon embedded} if the function $|z|^2+|w|^2$ restricts to $S$ as a Morse function with no local maximal points on $\text{int }S$.  A properly embedded surface in $D^4$ is said to be \emph{ribbon} if it is isotopic to a surface which is ribbon embedded.  By fixing an identification of $D^2 \times D^2$ with $D^4$, we can similarly consider ribbon surfaces in $D^2 \times D^2$ (the definition of ribbon embeddings in $D^2 \times D^2$ will depend on our choice of identification, though the resulting class of ribbon surfaces will not).  

Rudolph proved that any orientable ribbon surface in $D^2 \times D^2$ is isotopic to a braided surface, though in general this isotopy cannot be chosen to fix $\partial S$, even if $\partial S$ is already a closed braid.


\subsection{Braided surfaces with caps}
\label{sec:BraidedSurfacesWithCaps}
The branch loci in $D^2 \times D^2$ we consider in this paper will not in general be ribbon, and hence cannot be braided via Rudolph's algorithm.  We thus consider a less restrictive notion of braiding, which we define now.

Let $\phi:F \rightarrow \Sigma$ be a smooth map of oriented surfaces.  Then a \emph{cap of $F$ with respect to} $\phi$ is an  embedded disk $D \subset F$, so that 
\begin{enumerate}
\item $\phi$ restricts to embeddings on $\text{int }D$ and on $\partial D$,
\item $F$ and $\Sigma$ both admit coordinate charts of the form $S^1 \times [-1,1]$ around $\partial D = S^1 \times \{0\}$ and $\phi(\partial D) = S^1 \times \{0\}$, on which $\phi$ is given by $(\theta,t) \mapsto (\theta,t^2)$,
\item in the above coordinate chart around $\phi(\partial D)$, the curve $S^1 \times \{1\}$ lies in $\phi(\text{int }D)$.  
\end{enumerate}

Now let $S \subset D^2 \times D^2$, and let $\mathrm{pr}_S$ denote the restriction of $\mathrm{pr}_2$ to $S$.  We say that $S$ is a \emph{braided surface with caps} if the critical points of $\mathrm{pr}_S$ all correspond either to isolated simple branch points or boundaries of caps of $S$ with respect to $\mathrm{pr}_S$.  Moreover, we will often assume that the critical values in $D^2$ form a set of embedded concentric circles (corresponding to the boundaries of caps), with isolated critical values  lying inside the innermost circle.  See Figure \ref{fig:braidsurfacewithcaps} for a cross sectional diagram of a braided surface with a single cap.  

\begin{figure}
 \centering
 \includegraphics[width=7cm]{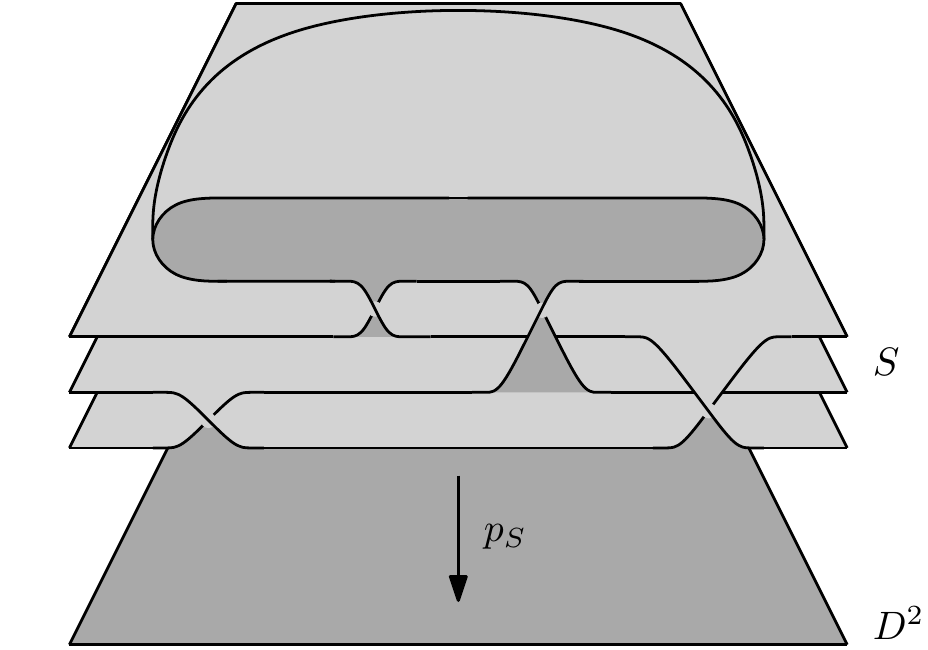}
  \caption{Cross section of a braided surface with caps}
  \label{fig:braidsurfacewithcaps}
\end{figure}

The following was proven in \cite{Hughes2015}.

\begin{theorem}
\label{thm:braidedsurfaceswithcaps}
Let $S$ be a smooth oriented properly embedded surface in $D^2 \times D^2$.  Then $S$ is isotopic to a braided surface with caps.  If $\partial S$ is already a closed braid, then the isotopy can be chosen rel $\partial S$.
\end{theorem}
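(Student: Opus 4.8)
The plan is to work directly with the map $\mathrm{pr}_S = \mathrm{pr}_2|_S : S \to D^2$ and to put it into the required normal form by first making it generic and then simplifying its critical image. The essential point to keep in mind is that $\mathrm{pr}_2$ is fixed, so every modification of $\mathrm{pr}_S$ must be realized by an ambient isotopy of the embedding $S \subset D^2 \times D^2$ that keeps $S$ embedded; this is the recurring technical constraint, and it is what separates the present statement from a purely map-theoretic one. The target structure has exactly two kinds of allowed critical data, so the strategy is to arrange that the critical image of $\mathrm{pr}_S$ consists of embedded concentric fold circles bounding caps, together with finitely many isolated branch points over the innermost disk.

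First I would apply a small ambient isotopy so that $\mathrm{pr}_S$ is an excellent (stable) map. By Whitney's theorem for maps between surfaces its singular set is then a disjoint union of circles carrying finitely many cusps, with fold and cusp local models. I would then identify the two features allowed by the definition of a braided surface with caps in terms of this generic picture: a \emph{definite} fold circle bounding an embedded disk $D \subset S$ whose interior double-covers the correct side of the image circle is precisely a cap, with conditions (1)--(3) reducing to a local model check; and the local model $u \mapsto u^2$ of a simple branch point, when perturbed to an excellent map, has a small three-cusped (deltoid) critical image, which conversely can be collapsed back to an isolated branch point. Thus the two permitted types correspond to definite fold circles and to deltoid-type critical circles.

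Next I would reach the normal form by a sequence of singularity moves---cusp cancellations and merges, fold merges and births, and the deltoid-to-branch-point collapse---each realized by an ambient isotopy of $S$, so that all surviving fold circles are definite, embedded, and concentric in the base; each such circle bounds a cap across which the number of sheets of $\mathrm{pr}_S$ changes by two; and the only remaining critical data are isolated branch points lying over the innermost disk. For the ribbon portion of $S$ this simplification recovers exactly Rudolph's branching picture, so his algorithm can be used to organize the branch points over the central disk, while the folds that obstruct the ribbon hypothesis are exactly the ones absorbed into caps.

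The main obstacle is twofold. First, the map-level moves above (the surface analogue of eliminating and creating folds and cusps, in the spirit of Eliashberg--Mishachev wrinkling) must each be carried out by an ambient isotopy of the embedded surface, keeping $S$ embedded throughout; verifying that every such move admits an embedded realization is the technical heart of the construction. Second, and most delicate, is the \emph{rel} $\partial S$ refinement, since Rudolph's braiding in general moves a boundary braid. When $\partial S$ is already a closed braid, a collar $S \cap (D^2 \times A)$ over an outer annulus $A \subset D^2$ is a product braid, and the extra freedom provided by caps---precisely what is missing from the purely ribbon theory---can be used to absorb any boundary motion that the braiding would otherwise force, so that all of the moves are supported over a slightly smaller disk $D^2_{1-\epsilon}$ and fix the collar pointwise. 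Showing that this absorption is always available, and that the concentric normal form can be reached without disturbing the fixed collar, is the crux of the argument.
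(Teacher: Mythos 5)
The paper does not actually prove Theorem~\ref{thm:braidedsurfaceswithcaps}: it is quoted from \cite{Hughes2015}, where the argument proceeds by braiding a movie presentation of $S$ (the sequence of link cobordisms cut out by a radial function), extending Rudolph's algorithm to handle the local maxima --- exactly the obstruction to being ribbon --- by converting them into caps, and doing all of this rel a boundary braid. Your singularity-theoretic route through stable maps is genuinely different in flavor, and your dictionary (simple branch point $\leftrightarrow$ three-cusped deltoid after perturbation; cap $\leftrightarrow$ fold circle bounding a disk that embeds under the projection) is correct as far as it goes. But as written the proposal is a plan rather than a proof: the two steps you yourself flag as ``the technical heart'' and ``the crux'' are precisely the content of the theorem, and no argument is given for either.

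Concretely, the gap is this. Perturbing $\mathrm{pr}_S$ to an excellent map and invoking Whitney's local models is fine, but your subsequent singularity moves (cusp cancellation and merging, fold births and merges, deltoid collapse) are moves on \emph{maps} $S\rightarrow D^2$, and there is no general principle guaranteeing that a homotopy of the composite $\mathrm{pr}_2\circ\iota$ can be realized by an isotopy of the embedding $\iota:S\hookrightarrow D^2\times D^2$ with $\mathrm{pr}_2$ held fixed and $S$ embedded throughout; the failure of exactly this kind of lifting is why Rudolph's braiding theorem for ribbon surfaces requires an actual algorithm rather than general position. In addition, conditions (1)--(3) in the definition of a cap are global: a generic fold circle of $\mathrm{pr}_S$ need not be embedded in $D^2$, need not bound a disk in $S$, and even when it does, $\mathrm{pr}_S$ need not embed that disk; you give no mechanism for trading an arbitrary configuration of fold circles and cusps for this very special one. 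Finally, the rel $\partial S$ refinement is handled only by asserting that caps can ``absorb'' whatever boundary motion the braiding forces. Each of these three points requires a substantive argument, and none is supplied; supplying them is essentially what \cite{Hughes2015} does, by a different (movie-theoretic) route.
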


\section{Broken Lefschetz fibrations from branched coverings}
\label{sec:BLFsBranchedCoverings}

Our method for creating BLFs is based on Proposition~\ref{prop:BranchedCoveringToBALF}, which takes as input a simple branched covering $h:X\rightarrow D^2 \times D^2$ with orientable branch locus, and yields a broken achiral Lefschetz fibration (BALF) $g:X \rightarrow D^2$.  By applying a local modification to our branched covering near the anti-Lefschetz critical points, we can remove all such critical points to yield a BLF $f:X\rightarrow D^2$.  Finally, we show how this technique can be combined with techniques of Gay and Kirby to produce BLFs over $S^2$ on many closed 4-manifolds.

\subsection{BALFs from branched coverings}
\label{sec:BALFsFromBranchedCoverings}

Proposition~\ref{prop:BranchedCoveringToBALF} is a straightforward generalization of Proposition~1.2 of \cite{LoiandPiergallini} to branched coverings with  non-ribbon branch loci.

\begin{prop}
\label{prop:BranchedCoveringToBALF}
Suppose that $X$ is a smooth 4-manifold with boundary, and that $h : X \rightarrow D^2 \times D^2$ is a simple branched covering with branch locus $B_h \subset D^2 \times D^2$ an embedded orientable surface.  Then there is an isotopy $\phi_t:D^2 \times D^2 \rightarrow D^2 \times D^2$, $\phi_0=\mathrm{id}_{D^2 \times D^2}$, such that $\mathrm{pr}_2 \circ \phi_1 \circ h : X \rightarrow D^2$ is a broken achiral Lefschetz fibration.
\end{prop}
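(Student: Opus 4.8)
The plan is to use Theorem~\ref{thm:braidedsurfaceswithcaps} to straighten the branch locus $B_h$ into a braided surface with caps, and then analyze how the composition $\mathrm{pr}_2 \circ h$ behaves near each type of critical point of the straightened branch locus. Since $B_h$ is a smooth oriented properly embedded surface in $D^2 \times D^2$, the theorem provides an ambient isotopy $\phi_t$ of $D^2 \times D^2$ with $\phi_0 = \mathrm{id}$ such that $\phi_1(B_h)$ is a braided surface with caps; equivalently, $\mathrm{pr}_S$ applied to the straightened locus $S = \phi_1(B_h)$ has critical points consisting only of isolated simple branch points and boundaries of caps, with the critical values arranged as concentric circles containing the isolated branch values in the innermost disk. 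I would then set $g = \mathrm{pr}_2 \circ \phi_1 \circ h$ and verify that $g$ is a broken achiral Lefschetz fibration by a local analysis of its critical points.

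First I would identify where the critical points of $g$ come from. Since $h$ is a simple branched covering and $\phi_1$ is a diffeomorphism, away from the branch locus $g$ is a submersion composed with a submersion, hence a submersion, so all critical points of $g$ occur over (preimages of) critical points of $\mathrm{pr}_S$. There are two cases. At an \emph{isolated simple branch point} of $S$, the branch locus looks locally like $u^2 = v$ in $(u,v)$ coordinates on $D^2 \times D^2$, and $\mathrm{pr}_2$ is projection to $v$. Over such a point the double branched cover $X$ is modeled on the map $(\text{sheet coordinate}) \mapsto v$, and pulling back through the square-root branching, $g$ is locally modeled on $u \mapsto u^2$ in the covering coordinate, which after complexifying the transverse directions produces exactly the Lefschetz model $u^2 + w^2$ (orientation-preserving giving a Lefschetz point, orientation-reversing giving an anti-Lefschetz point). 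This is the content of the Loi--Piergallini computation, and it is why positive and negative branch points yield Lefschetz and anti-Lefschetz critical points respectively.

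The genuinely new case, and the one I expect to be the main obstacle, is the \emph{cap} case, where $\mathrm{pr}_S$ has a circle of critical points along $\partial D$ modeled by $(\theta, t) \mapsto (\theta, t^2)$. Here I would argue that the corresponding critical locus in $X$ is an embedded circle over which $g$ is modeled by the round $1$-handle form $(\theta, x, y, z) \mapsto (\theta, x^2 + y^2 - z^2)$. The key point is that crossing $\phi(\partial D)$ in the base changes the number of local sheets of the branched cover by the folding of the cap, and the preimage of the page transition is precisely a fiberwise $0$-surgery (equivalently a $1$-surgery in the other direction), matching the discussion of round $1$-handle singularities in Section~\ref{sec:TopologyOfBrokenFibrations}. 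Condition~(3) in the definition of a cap, that $S^1 \times \{1\}$ lies in $\phi(\mathrm{int}\,D)$, is what guarantees the fold is oriented so that the branched cover locally produces the correct broken model rather than a boundary singularity. I would verify this by writing $g$ explicitly in the branched-cover coordinates over the chart $S^1 \times [-1,1]$ and checking that the resulting Hessian in the transverse directions has the signature $(+,+,-)$ demanded by the broken model. The concentric arrangement of critical values then ensures these local pictures glue into a globally well-defined fibration with the claimed critical points, so that $g = \mathrm{pr}_2 \circ \phi_1 \circ h$ is a broken achiral Lefschetz fibration.
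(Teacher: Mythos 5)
Your proposal is correct and follows essentially the same route as the paper: apply Theorem~\ref{thm:braidedsurfaceswithcaps} to isotope $B_h$ to a braided surface with caps, invoke the Loi--Piergallini local computation at the isolated positive and negative branch points to get Lefschetz and anti-Lefschetz critical points, and combine the fold model $(s,r)\mapsto(0,r,s,r^2)$ along cap boundaries with the local branched-covering model $(u,v)\mapsto(u^2,v)$ to exhibit the round 1-handle singularities, with the concentric arrangement of critical values handling the global picture. The only difference is cosmetic: you phrase the cap analysis partly in terms of the fiberwise $0$-surgery and a Hessian signature check, where the paper simply asserts that composing the two local models yields the required normal form.
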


\begin{proof}

By Theorem~\ref{thm:braidedsurfaceswithcaps}, $B_h$ is isotopic in $D^2 \times D^2$ to a braided surface with caps.  Let $\phi_t$ be an isotopy of $D^2 \times D^2$ which takes $B_h$ to such a surface.  Let $H = \phi_1 \circ h$ denote the isotoped branched covering, and let $B_H$ denote its branch locus.  Away from the preimages of the critical points of $\mathrm{pr}_2|_{B_H}$, the composition $g=\mathrm{pr}_2 \circ H$ is a regular map.  By \cite{LoiandPiergallini} $g$ has a Lefschetz (respectively anti-Lefschetz) critical point for every positive (respectively negative) branch point of $\mathrm{pr}_2|_{B_H}$.  

To see that the fold lines of $B_H$ along the boundaries of the caps give round 1-handle singularities, note that along these fold lines $B_H$ is locally embedded as $\mathbb{R}^2 \rightarrow \mathbb{R}^2 \times \mathbb{R}^2$, by $(s,r) \mapsto (0,r,s,r^2)$.  Furthermore, near nonsingular points of $B_H$, $H$ can be written in complex coordinates as $(u,v) \mapsto (u^2, v)$, where $B_H$ is given locally by $u=0$.  Combining these two local models yields a map of the required local form.  
Furthermore, the folds of $B_H$ can be pushed out so that they lie above a neighborhood of the boundary of $D^2$, so that their images form a collection of concentric circles in $D^2$ which enclose the Lefschetz and anti-Lefschetz critical values.  
\end{proof}

\begin{remark}
Note that Proposition~\ref{prop:BranchedCoveringToBALF} holds more generally than stated above.  Indeed, by \cite{Baykur,Lekili} any generic map $X \rightarrow D^2$ can be perturbed to become a BLF.  The proof of Proposition~\ref{prop:BranchedCoveringToBALF} is what will be most useful to us, since the branched covering $h : X \rightarrow D^2 \times D^2$ and the isotopy $\phi_t :D^2 \times D^2 \rightarrow D^2 \times D^2$ can often be constructed by hand from a given Kirby diagram of $X$ (see Section~\ref{sec:Examples}).
\end{remark}

\subsection{Replacing anti-Lefschetz critical points} Now suppose that $h:X \rightarrow D^2 \times D^2$ is a branched covering with branch locus $B_h \subset D^2 \times D^2$ an embedded orientable surface, such that $\mathrm{pr}_2 \circ h : X \rightarrow D^2$ is a broken achiral Lefschetz fibration.  It remains to show that after making local modifications to $h$ we can remove all anti-Lefschetz critical points.


\begin{prop}
\label{prop:antiLefschetzReplacement}
Let $h:X \rightarrow D^2 \times D^2$ be as above.  Then there is a branched covering $g:X \rightarrow D^2 \times D^2$ which agrees with $h$ outside of a small neighborhood of the anti-Lefschetz critical points of $\mathrm{pr}_2 \circ h$, such that $\mathrm{pr}_2 \circ g:X \rightarrow D^2$ is a broken Lefschetz fibration.
\end{prop}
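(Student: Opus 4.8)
The plan is to reduce to a purely local statement and to modify the branch locus in a small ball around each anti-Lefschetz critical point. Since $\mathrm{pr}_2 \circ h$ is a BALF, each anti-Lefschetz critical point is the image under $h$ of an isolated \emph{negative} branch point $p$ of $\mathrm{pr}_2|_{B_h}$, and by the concentric-circle normal form obtained in the proof of Proposition~\ref{prop:BranchedCoveringToBALF} we may assume that these negative branch points are disjoint and lie in small disjoint balls, well inside the innermost fold circle. It therefore suffices to work in a single ball $B \cong D^4$ about such a $p$, in which $B_h$ is a single sheet carrying one negative branch point and meeting $\partial B$ in a fixed braid. I would then replace $h|_{h^{-1}(B)}$ by a new simple branched covering with the same source and the same boundary data, but whose branch locus consists of one \emph{positive} branch point together with a single fold circle (the boundary of a cap in the sense of Section~\ref{sec:BraidedSurfacesWithCaps}). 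Performing this replacement simultaneously in every such ball, and leaving $h$ untouched elsewhere, produces the desired $g$.

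The geometric mechanism I would use to flip the sign is the fold itself. Recall that the sign of a branch point of $\mathrm{pr}_2|_{B_h}$ is determined by whether $\mathrm{pr}_2$ restricts to $B_h$ as an orientation-preserving or orientation-reversing map near $p$; in the fold normal form $(\theta,t) \mapsto (\theta,t^2)$ the Jacobian in the $t$ direction is $2t$, so the local orientation of $\mathrm{pr}_2|_{B_h}$ is reversed as one crosses a fold. The idea is to introduce a small cap whose fold circle encircles $p$, so that $p$ comes to lie on the reversed sheet of the cap; relative to the fixed orientation of $B_h$ the branch point then reads as positive, and its lift becomes a Lefschetz rather than an anti-Lefschetz critical point. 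Simultaneously, the new fold circle gives a round $1$-handle singularity of $\mathrm{pr}_2 \circ g$ by exactly the local computation carried out in the proof of Proposition~\ref{prop:BranchedCoveringToBALF}, where the fold of $B_h$ along a cap boundary was shown to yield the model $(\theta,x,y,z)\mapsto(\theta,x^2+y^2-z^2)$. Thus the single anti-Lefschetz critical point is traded for one Lefschetz critical point flanked by a round $1$-handle singularity, which is precisely the kind of configuration permitted in a broken Lefschetz fibration.

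The main obstacle, and the step requiring the most care, is checking that this modification is realized by a genuine branched covering of the \emph{same} $X$. Concretely, I must verify that the simple branched cover of $B \cong D^4$ branched over the modified local surface (one positive branch point plus one cap) is diffeomorphic, rel boundary, to $h^{-1}(B)$, the branched cover over the original negative branch point; only then can the new local covering be glued to the unmodified $h$ along $\partial B$ to give a covering with total space $X$. This amounts to exhibiting an explicit sheet-labelling, that is, a consistent assignment of transpositions to the meridians of the new branch surface, which agrees with the monodromy of $h$ on $\partial B$ and for which the merging of sheets along the cap boundary is compatible, together with an identification of the two resulting $4$-manifold pieces. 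I expect this to follow from writing both local branched covers down by hand in the normal coordinates above and producing the diffeomorphism directly, since the two local models differ only by the insertion of the fold.

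Finally, once the local move is justified, I would record the accompanying monodromy bookkeeping: crossing the new fold circle changes the fiber genus by $\pm 1$ as described in Section~\ref{sec:TopologyOfBrokenFibrations}, and I would check that the round $1$-handle together with the new right-handed Dehn twist reproduces, outside $B$, exactly the left-handed vanishing-cycle monodromy of the original anti-Lefschetz point, so that the global fibration is unchanged away from the modified balls. Applying the modification to every negative branch point of $\mathrm{pr}_2|_{B_h}$ then removes all anti-Lefschetz critical points, and the resulting $\mathrm{pr}_2 \circ g : X \rightarrow D^2$ is a broken Lefschetz fibration.
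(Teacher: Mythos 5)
Your overall strategy (localize to a ball around each negative branch point, replace the local branch surface by one containing a cap, and glue back) is the same as the paper's, but the local move you propose is different from the one the paper uses, and the two places where your argument actually has to do work are both left unjustified. First, the sign-flipping mechanism is not sound as stated. The sign of a branch point of $\mathrm{pr}_2|_{B_h}$ is a local invariant of the oriented surface and the projection \emph{at} $p$; introducing a fold circle that encircles $f(p)$ in the base does not alter the local model $z^2=w$ at $p$, so $p$ remains a negative branch point and its lift remains anti-Lefschetz. Note also that by the definition in Section~\ref{sec:BraidedSurfacesWithCaps} the interior of a cap is embedded by $\mathrm{pr}_2$, so $p$ cannot literally ``lie on the reversed sheet of the cap.'' To change the sign you must genuinely replace the local branch surface by a different one, and then the identification of the new branched cover with $h^{-1}(D\times D')$ rel boundary is the entire content of the proposition --- which you defer with ``I expect this to follow.''

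Second, your proposed configuration (one positive branch point plus one cap, i.e.\ one Lefschetz point inside one round singularity) is not the one that is known to work, and you give no monodromy identity to support it. The paper instead lifts Lekili's wrinkling move: the negative branch point is replaced by the braid movie of Figure~\ref{fig:LekiliReplacement}, producing \emph{three} Lefschetz critical points inside a single round $1$-handle singularity, and the identification $V\cong V'$ is obtained not by an explicit hand-built diffeomorphism but by matching the resulting monodromy with Lekili's computation. For your version one would need an identity in the mapping class group of the surgered fiber expressing the left-handed Dehn twist $t_c^{-1}$ as a single right-handed twist composed with the round-handle data; no such identity is exhibited, and the established replacement requires three twists. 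So the gap is concrete: the local model you substitute is not shown to have the correct boundary monodromy, and hence not shown to have total space diffeomorphic rel boundary to the original, which is exactly the step the paper outsources to Lekili's theorem.
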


\begin{proof}
We will show that Lekili's ``wrinkling" replacement move \cite{Lekili}, which replaces a single anti-Lefschetz critical point with three new Lefschetz critical points and a new broken singularity, can be lifted to our branch covering $h$.  

Let $p \in X$ be an anti-Lefschetz critical point of $f=\mathrm{pr}_2 \circ h$, and choose small disks $D, D'$ (thought of as sitting in the first and second factor of $D^2 \times D^2$ respectively), such that $D \times D'$ is a small neighborhood of $h(p) \in D^2 \times D^2$.  Note that $h(p)$ will lie on $B_h$, the branch locus of $h$, and will be a negative branch point of the map $\mathrm{pr}_2|_{B_h}:B_h \rightarrow D^2$.  The disks $D$ and $D'$ can be chosen so that $B_h \cap (D \times D')$ is given by $z^2=w$ for some orientation-reversing complex coordinates $(z,w)$ on $D \times D'$ centered at $h(p)$.  Hence $B_h \cap \partial (D \times D')$ will be a closed braid in $D \times \partial D'$ of index 2 with a single negative twist. 

If $n$ is a branched covering of degree $n$, then the preimage $h^{-1}(D \times D')$ will consist of $n-1$ components, $n-2$ of which are mapped homeomorphically onto $D \times D'$, while the remaining is mapped as a 2-to-1 covering of $D \times D'$ branched along $(D \times D') \cap B_h$.  Denote this latter component by $V$.

Pick a point $q \in D' \backslash f(p)$, and parametrized $D' \backslash q$ as $\partial D' \times (0,1]$.  Then for each $t \in (0,1]$ the intersection of $B_h$ with the solid torus $C_t = D \times (\partial D' \times t)$ will be a (possibly singular) closed braid.  Figure~\ref{fig:AntiLefCP} shows a sequence of braids whose closures are $B_h \cap C_t$ for various values of $t$.

\begin{figure}
 \centering
 \includegraphics[width=0.45\textwidth]{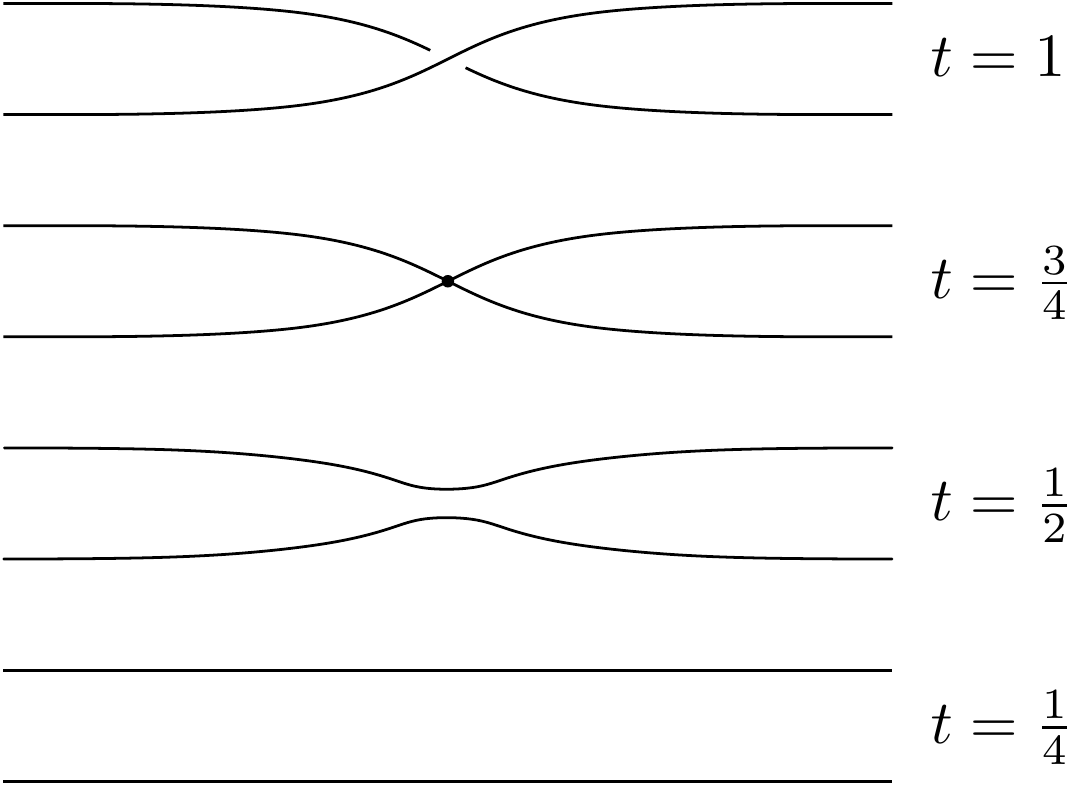}
  \caption{Branch locus corresponding to anti-Lefschetz critical point}
  \label{fig:AntiLefCP}
\end{figure} 

Consider instead the 2-fold branched covering $g:V' \rightarrow D\times D'$, whose branch locus in $D \times D'$ is given by the sequence of braids in Figure~\ref{fig:LekiliReplacement}, where at time $t=\tfrac{7}{8}$ a pair of strands is being born along the boundary of a cap.  The composition $\mathrm{pr}_2 \circ g : V' \rightarrow D'$ is a broken Lefschetz fibration, with three Lefschetz critical points contained inside one round 1-handle singularity.  By comparing the resulting monodromy with the monodromy from Lekili's wrinkle replacement \cite{Lekili}, we see that the corresponding fibrations are the same, and hence $V \cong V'$.  Moreover, the restriction of $g$ and $h$ to $\partial (D \times D')$ agree.  We can thus modify the branched covering $h$ in a neighborhood of any anti-Lefschetz critical points to obtain the desired broken Lefschetz fibration.
\begin{figure}
 \centering
 \includegraphics[width=\textwidth]{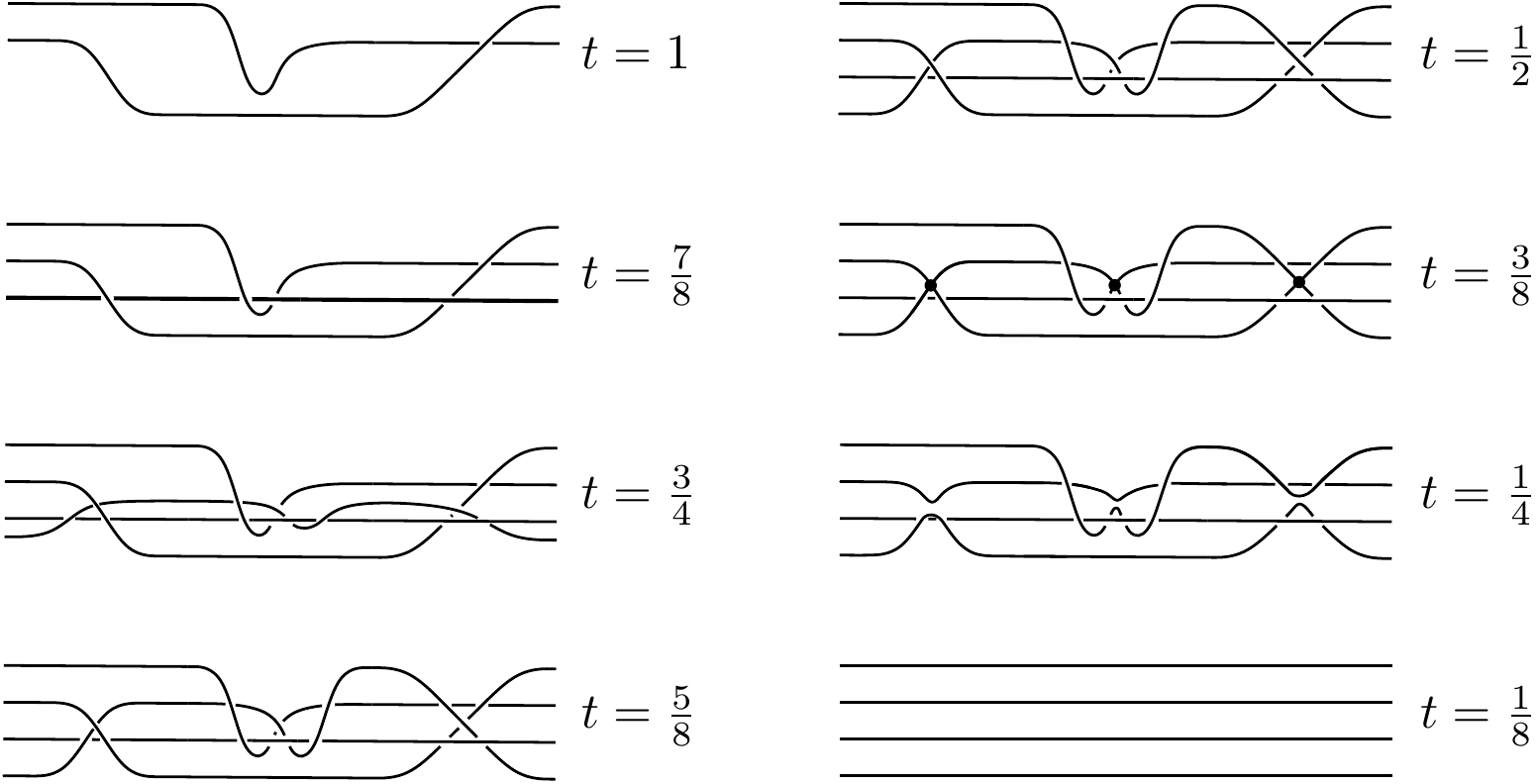}
  \caption{Branch locus corresponding to anti-Lefschetz replacement}
  \label{fig:LekiliReplacement}
\end{figure}
\end{proof}

\section{BLFs on closed 4-manifolds}  We now show how Propositions~\ref{prop:BranchedCoveringToBALF} and \ref{prop:antiLefschetzReplacement} can be used in many cases to construct a BLF $f:X \rightarrow S^2$ on a closed orientable 4-manifold $X$ from a given handle decomposition.  Let $F \subset X$ be a closed surface with $F \cdot F = 0$, and consider a tubular neighborhood $\nu F$ of $F$.  For simplicity, we describe first the construction in the case that $F \cong S^2$, and hence $\nu F \cong S^2 \times D^2$.  Such a neighborhood can sometimes be identified in the handle diagram of $X$ as a 2-handle attached along a 0-framed unknot together with the 0-handle of $X$.  If no such $S^2 \times D^2$ can be identified, it can be added to the diagram by adjoining a cancelling 2 and 3-handle pair, where the 2-handle is attached along a 0-framed unknot.  We will think of the union of this 2-handle with the 4-handle to which it is attached as forming $\nu F$.   

\subsection{Building the concave piece} \label{subsec:GayandKirby} We describe how to construct a concave broken fibration $f: \nu F \rightarrow S^2$ with a single round 1-handle singularity and no Lefschetz critical points.  This construction is originally due to Auroux, Donaldson, and Katzarkov \cite{AurouxDonaldsonKatzarkov}, as part of their construction of a broken Lefschetz fibration on $S^4$, though our description follows that in \cite{GayandKirby}.  

\begin{figure}
 \centering
 \includegraphics[width=\textwidth]{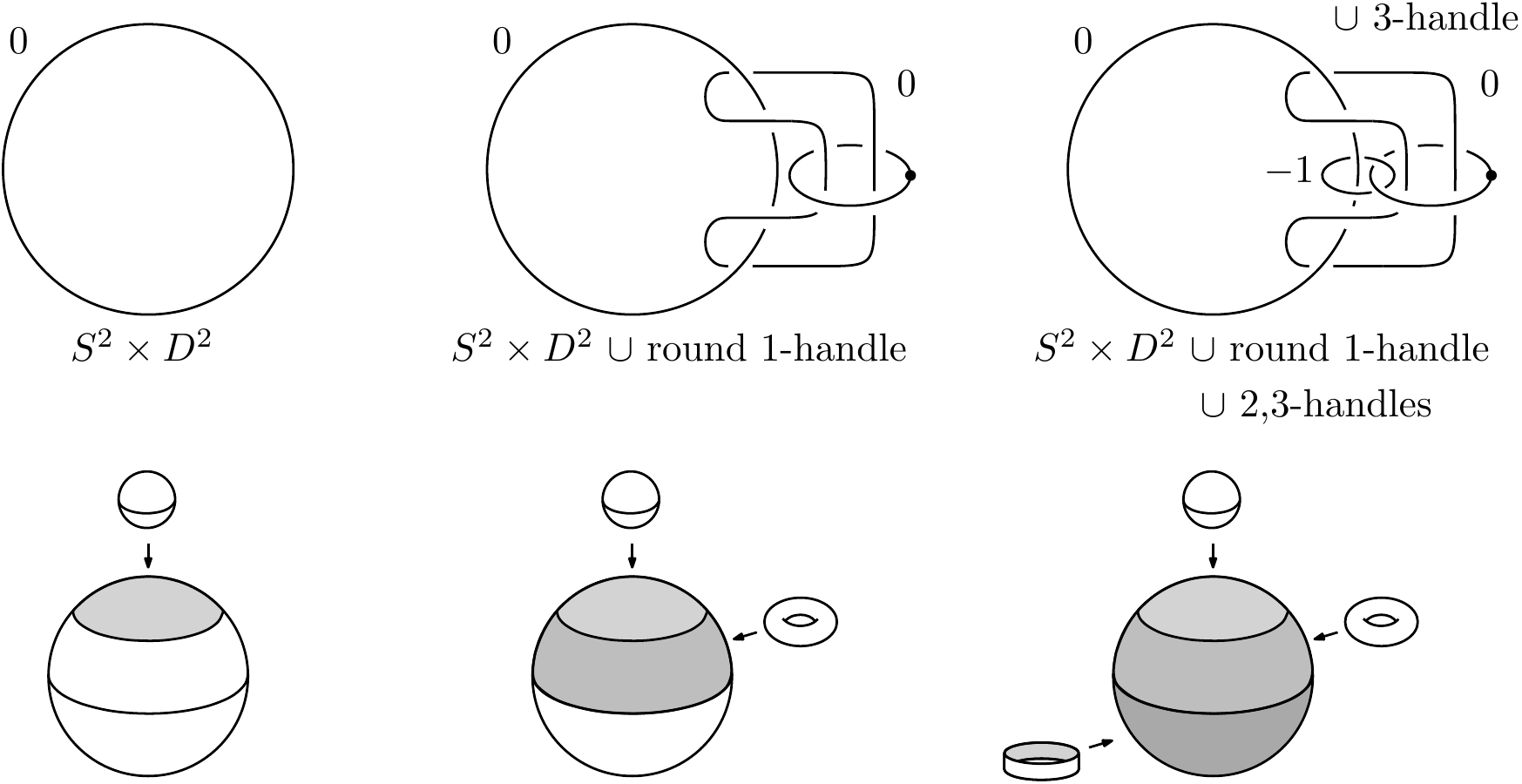}
  \caption{Concave broken fibration on $S^2 \times D^2$.}
  \label{fig:ADK}
\end{figure}

We begin by identifying the target of the projection $\mathrm{pr}_2 : S^2 \times D^2 \rightarrow
 D^2$ with the northern polar cap in $S^2$.  This defines a fibration of $S^2 \times D^2$ with fiber $S^2$ over this region (see the bottom left diagram in Figure~\ref{fig:ADK}).  Expressing $S^2 \times D^2$ with the usual handlebody diagram (top left Figure~\ref{fig:ADK}), we can add a 1-handle and 0-framed 2-handle to this diagram, as in the top middle diagram.  Taken together, these two handles can be interpreted as a round 1-handle, which is attached to $S^2 \times D^2$ along two sections of the existing fibration restricted to the boundary.  We can thus extend this fibration over the round 1-handle, giving a fibration over the northern hemisphere with a round 1-handle singularity over the arctic circle.  Fibers between the equator and arctic circle will be obtained from the polar fibers by 0-surgery, and hence will be tori.  Note that the fibration we have constructed so far is flat along its boundary.  
 
Finally, we add an additional 2-handle $H_2$, and a 3-handle $H_3$ to our diagram (top right, Figure~\ref{fig:ADK}).  The attaching circle of $H_2$ is a section of the flat fibration restricted to the boundary, and hence the fibration can be extended over $H_2$, by projecting it to the southern hemisphere (with fiber $D^2$).  The resulting fibration is concave.  The page of the boundary open book decomposition is a torus with a single hole (which resulted from attaching the 2-handle $H_2$), while its binding will be the belt-sphere of $H_2$.

The attaching sphere of the new 3-handle $H_3$ is arranged so that it intersects the binding at its north and south poles, and so that it intersects each page in a properly embedded arc.  The fibration can then be extended across $H_3$, resulting in no new critical points.  This extension changes the $D^2$ fibers over the southern hemisphere by adding a 2-dimensional 1-handle, yielding annular fibers.  On the other hand, the pages of the boundary open book change by the \emph{removal} of a neighborhood of a properly embedded arc from the puncture torus pages (the intersection of the original page with the attaching sphere of $H_3$), yielding annular pages.  

This gives a concave broken fibration as depicted in the bottom right diagram of Figure~\ref{fig:ADK}, with a single round 1-handle singularity, and no Lefschetz or anti-Lefschetz critical points.  Moreover, after sliding the 0-framed 2-handle off of the 1-handle, we find that the added 1,2, and 3-handles all form canceling pairs.  Hence the total space of our fibration is diffeomorphic to $S^2 \times D^2\cong \nu F$. Notice that the induced open book decomposition on $\partial (\nu F)$ will have disconnected binding, which may cause problems when we try to construct a matching convex fibration on $X \backslash \nu F$.  We thus instead think of the lone canceling 3-handle as being attached as a 1-handle to $X \backslash \nu F$, and construct a concave fibration $f_1$ on $X_1=\nu F \backslash \{3\text{-handle}\}$, whose boundary open book decomposition has punctured torus page and connected binding (see Figure~\ref{fig:HarerGenus}).

If instead $F$ has genus $g \geq 1$ we can proceed much as before, either identifying a neighborhood $\nu F$ in the handle diagram of $X$, or by adding a standard diagram of $F \times D^2$ with additional 2 and 3-handles to cancel the 1 and 2-handles of $\nu F$.  To this diagram we could add a round 1-handle and pair of 2 and 3-handles (see Figure~\ref{fig:HarerGenus}) and continue as above. 

\begin{figure}
 \centering
 \includegraphics[width=\textwidth]{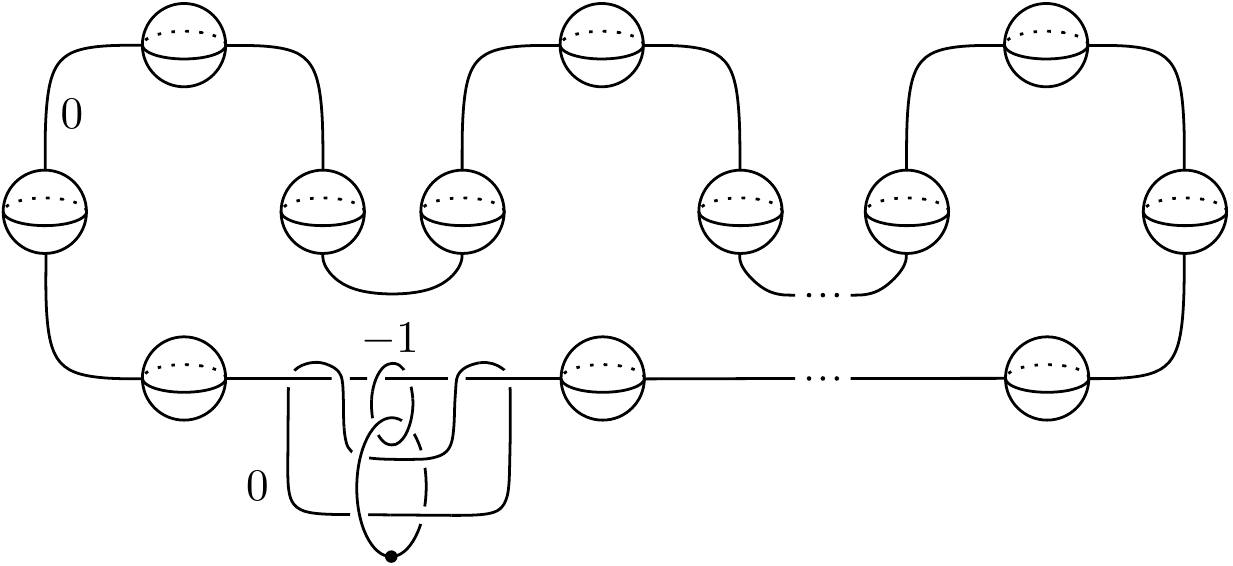}
  \caption[$F \times D^2$ with additional handles]{Neighborhood of $F \subset X$ with an extra 2-handle and round 1-handle added}
  \label{fig:HarerGenus}
\end{figure}


\subsection{Building the convex piece} 
\label{subsec:ConvexPiece} 
Let $Y=X\backslash X_1$.  We now discuss how to use a handle structure on $Y$ to build a convex fibration $g:Y \rightarrow D^2$, so that it extends the open book decomposition $\lambda:\partial Y \rightarrow D^2$ induced by the concave fibration $f_1:\nu F \rightarrow S^2$.  We attempt to do this in three steps:

\begin{enumerate}
\item Express the OBD $\lambda$ as $\lambda = \text{pr}_2 \circ h$, where $h : \partial Y \rightarrow \partial (D^2 \times D^2)$ is a simple covering branched along a closed braid in $\partial (D^2 \times D^2)$, and $\text{pr}_2 : D^2 \times D^2 \rightarrow D^2$ is the projection.
\item Extend the branched covering $h$ to a covering $H:Y \rightarrow D^2 \times D^2$ branched along an orientable surface.
\item Use Propositions~\ref{prop:BranchedCoveringToBALF} and \ref{prop:antiLefschetzReplacement} to obtain the desired BLF.
\end{enumerate}
Part (1) is always possible.  Indeed, let $P$ be the page of $\lambda$, with monodromy $\tau : P \rightarrow P$.  Then by choosing a suitable (degree $\geq$ 3 and simple) branched covering $\alpha:P\rightarrow D^2$, the map $\tau$ is the lift of a map $\widehat{\tau}:D^2 \rightarrow D^2$ which fixes the branch locus of $\alpha$ setwise \cite{Hilden,LabruereandParis}.  If $K$ is the binding of $\lambda$, then this allows us to write $\partial Y \backslash \nu K$ as a branched covering of the solid torus $D^2 \times \partial D^2$ branched over a closed braid.  A matching (unbranched) covering $\nu K \rightarrow \partial D^2 \times D^2$ can be glued to this covering to give the desired map $h :\partial Y\rightarrow \partial (D^2 \times D^2)$.

Problems may arise when we try to carry out Part (2) of the above process, however.  The covering $h$ can always be extended to a branched covering $H:Y \rightarrow D^2 \times D^2$, though the branch locus may not be an orientable embedded surface.

To see how this covering is constructed, fix some choice of relative handle decomposition for the pair $(Y, \partial Y)$.  The covering $h:\partial Y \rightarrow \partial (D^2 \times D^2)$ can be extended to a covering 
\[
\widehat{h} : \partial Y \times [0,1] \longrightarrow \partial (D^2 \times D^2) \times [0,1]
\]
in the usual way.  Here, $\partial Y \times [0,1]$ and $\partial (D^2 \times D^2) \times [0,1]$ are thought of as collar neighborhoods of $\partial Y$ and $\partial (D^2 \times D^2)$ respectively.  Identify $\partial Y$ with $\partial Y \times \{0\}$, and let $\partial_+ Y = \partial Y \times \{1\}$.  
We now attempt to extend this covering over the handles of $Y$ to construct the desired covering $H$.

Let $\sigma_1 = D^1 \times D^3$ be a 1-handle, and let $\tau_1 : \sigma_1 \rightarrow \sigma_1$ be the involution defined by 
\[
\tau_1: (t,x,y,z) \longmapsto (-t,-x,y,z).
\] 
If $\sigma_1$ is a 1-handle in our handle decomposition of $(Y,\partial Y)$, then we can isotope its attaching map $\alpha_1 : S^0 \times D^3 \rightarrow \partial_+ Y$ so that it is symmetric with respect to $\widehat{h}$, i.e. so that $\widehat{h}\circ \alpha_1 \circ \tau = \widehat{h} \circ \alpha_1$.  Once this is done, by Lemma 6.1 of \cite{BernsteinandEdmonds} we can extend $\widehat{h}$ over the 1-handle $\sigma_1$, using the quotient induced by $\tau_1$.  The result is a branched covering of $(\partial Y \times [0,1]) \cup \sigma_1$ over $\partial (D^2 \times D^2) \times [0,1]$, where the new branch locus is obtained by adding a disjoint disk to the branch locus of $\widehat{h}$ in $\partial (D^2 \times D^2) \times [0,1]$.

Similarly, if $\sigma_2 = D^2 \times D^2$ is instead a 2-handle attached to $\partial_+ Y$ by some attaching map $\alpha_2 : S^1 \times D^2 \rightarrow \partial_+ Y$, by \cite{Edmonds} we can isotope $\alpha_2$ so that it becomes symmetric with respect to the involution $\tau_2 : D^2 \times D^2 \rightarrow D^2 \times D^2$, defined by
\[
\tau_2: (t,s,x,y) \longmapsto (-t,s,-x,y).
\]
Here, the attaching circle of $\sigma_2$ will intersect the branching set of $\widehat{h}$ in two points, say $p_1$ and $p_2$.  Then we can extend the covering $\widehat{h}$ to a branched covering of $(Y \times [0,1])\cup \sigma_2$ over $\partial (D^2 \times D^2)$, where the new branch locus is obtained by attaching a single band to the branch locus of $\widehat{h}$ at the points $\widehat{h}(p_1)$ and $\widehat{h}(p_2)$.  This band will have $n$ half-twists in it, where $n$ is the framing of $\sigma_2$.

When extending $\widehat{h}$ over a 2-handle $\sigma_2$, it is possible that the corresponding band $\beta$ may be attached to the branch locus $B \subset \partial (D^2 \times D^2) \times [0,1]$ in a nonorientable way.  By \cite{Bobtcheva} this can be remedied, by adding (or removing) a half-twist in $\beta$ as in Figure~\ref{fig:OrientableMove}.  In this local picture we have pushed $B$ entirely into the 3-dimensional space $\partial (D^2 \times D^2) \times \{1\}$, where it can be depicted as an immersed surface with only ribbon double points.  The labels on the components denote the associated monodromy action on the sheets of $\widehat{h}$.

\begin{figure}
 \centering
 \includegraphics[width=0.8\textwidth]{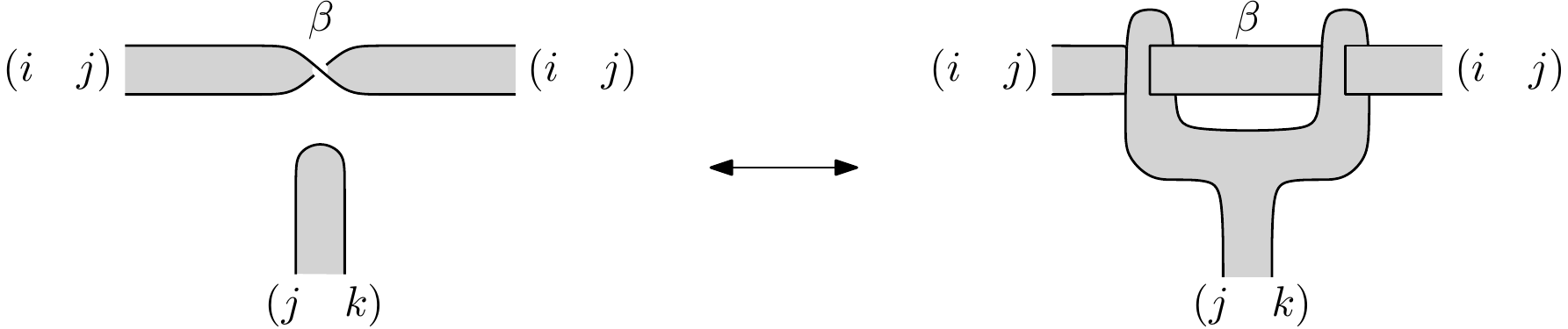}
  \caption[$F \times D^2$ with additional handles]{Fixing a nonorientable band in the branch locus of $\widehat{h}$}
  \label{fig:OrientableMove}
\end{figure}

Let $Y_2$ denote the union of $\partial Y \times [0,1]$ with the 1 and 2-handles.  We can thus extend the branched covering $h:\partial Y \rightarrow \partial (D^2 \times D^2)$ to a covering 
\[
\tilde{h}:Y_2\rightarrow \partial (D^2 \times D^2) \times [0,1]
\]
where the associated branch locus $\widetilde{B} \subset \partial (D^2 \times D^2) \times [0,1]$ is an embedded orientable surface.  If the intersection 
\[
\widetilde{B}_1=\widetilde{B}\cap\partial (D^2 \times D^2) \times \{1\}
\]
is an unlink, then $\tilde{h}$ can be extended across the 3 and 4-handles to give a branched covering $H:Y\rightarrow D^2\times D^2$ with orientable embedded branch locus.  This can be seen by noting that the union of the 3 and 4-handles is a thickened bouquet of circles, which can be expressed as a branched covering of $D^4$  with branch locus a collection of properly embedded disjoint disks.  If $\widetilde{B}_1$ is an unlink, this covering can be glued to $\tilde{h}:Y_2 \rightarrow \partial (D^2 \times D^2)\times [0,1]$ to obtain the desired covering $H$.  

In general however, $\widetilde{B}_1$ will not be an unlink.  By \cite{Piergallini} we can modify the covering by adding cusp and node singularities on the interior of $\widetilde{B}$ so that $\widetilde{B}_1$ becomes an unlink, though doing so may fail to preserve the required orientability of the branch locus $B$.  When this can be avoided, we can proceed with the rest of the construction to obtain a BLF of $X$ over $S^2$.

\subsection{BLFs on doubles of 4-manifolds}

We now discuss one situation in which the above construction will always be possible.  Let $U$ be a handlebody with single 0-handle and no 4-handles.  The \emph{double} of $U$ is the manifold $X  = U \cup_{\text{Id}_{\partial U}} \overline{U}$, where $\overline{U}$ denotes the handlebody $U$ with reversed orientation.  The handle structure on $U$ induces a handle structure on $X$ in a natural way, by turning the $j$-handles of $\overline{U}$ upside-down and attaching them as $(4-j)$-handles to $U$.

\begin{thm}
\label{thm:DoubleHandlebody}
Let $X$ be a smooth, closed, orientable 4-manifold, with handle structure coming from the double of a handlebody $U$.  Then the procedure described in Section~\ref{subsec:ConvexPiece} will produce a BLF $f:X\rightarrow S^2$.
\end{thm}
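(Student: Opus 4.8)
The plan is to verify that all three steps of the procedure in Section~\ref{subsec:ConvexPiece} can be carried out when $X=U\cup_{\mathrm{Id}}\overline{U}$ is a double, the only genuine obstruction being the orientability condition on the branch locus that arises in Step~(2). First I would fix the concave piece exactly as in Section~\ref{subsec:GayandKirby}: choose a sphere or higher-genus surface $F$ with $F\cdot F=0$ (introducing a canceling $2$--$3$ handle pair if no such $\nu F$ is visible in the doubled handle diagram), build the concave broken fibration $f_1$ on $\nu F$, and set $X_1=\nu F\setminus\{3\text{-handle}\}$ and $Y=X\setminus X_1$. Since Step~(1) is always possible, the whole theorem reduces to showing that the covering $h:\partial Y\to\partial(D^2\times D^2)$ extends over the handles of $Y$ to a covering $H:Y\to D^2\times D^2$ with \emph{orientable embedded} branch locus and with $\widetilde B_1$ an unlink, after which Propositions~\ref{prop:BranchedCoveringToBALF} and~\ref{prop:antiLefschetzReplacement} finish the construction.

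The idea I would exploit is the symmetry of the doubled handle structure. Writing $a,b,c$ for the numbers of $1$-, $2$-, and $3$-handles of $U$, the double $X$ has one $0$-handle, $a+c$ $1$-handles, $2b$ $2$-handles, $a+c$ $3$-handles, and one $4$-handle, where the $3$-handles and the $4$-handle appear as the upside-down duals of the $1$-handles and the $0$-handle of $\overline{U}$. Because $\overline{U}$ is the orientation-reversed mirror of $U$, I would choose the branch data used to extend $\widehat h$ over the $1$- and $2$-handles of $Y$ to be mirror-symmetric, reflecting the doubled handle structure: the ``lower half'' of the branch surface over $Y_2$ realizes the bands and disks coming from handles of $U$, and the ``upper half'' realizes their duals. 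The point of this symmetric choice is that the top slice $\widetilde B_1=\widetilde B\cap(\partial(D^2\times D^2)\times\{1\})$ is then forced to be the closed braid bounding the lower half, and the mirror structure lets me cancel it to a trivial closed braid, so that $\widetilde B_1$ is an unlink bounding the obvious collection of disks that the $3$-handles and the $4$-handle require. In this way the doubling removes precisely the step where Piergallini's cusp-and-node stabilization would otherwise be forced.

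The hard part will be orientability. Each $2$-handle of $Y$ contributes a band to the branch locus, and that band may be attached nonorientably; as in Figure~\ref{fig:OrientableMove} this is repaired by inserting or deleting a half-twist via Bobtcheva's move, but I must check that performing these repairs on the \emph{doubled} branch surface keeps $\widetilde B_1$ an unlink and keeps the global surface $B\subset D^2\times D^2$ orientable at the same time. The delicate point is exactly the interaction between the half-twist corrections (needed for orientability of the bands) and the requirement that $\widetilde B_1$ remain unknotted (needed to cap off with the $3$- and $4$-handles without introducing cusps), since in the general, non-symmetric case these two demands conflict. I expect the symmetry of the double to be what reconciles them: because the correcting half-twists can be applied in mirror pairs, an orientable representative and the unlink condition can be arranged simultaneously. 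Once this compatibility is established, Theorem~\ref{thm:braidedsurfaceswithcaps} braids $B_H$ (with caps), Proposition~\ref{prop:BranchedCoveringToBALF} produces a BALF on $Y$ matching $f_1$ along $\partial Y$, and Proposition~\ref{prop:antiLefschetzReplacement} upgrades it to a BLF, yielding the desired $f:X\to S^2$.
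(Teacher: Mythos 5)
Your high-level strategy matches the paper's: keep the concave piece from Section~\ref{subsec:GayandKirby}, observe that Step~(1) always works (the induced open book has trivial monodromy, so $h$ exists with a trivial closed braid as branch locus), and then use the structure of the double to guarantee that $\widetilde B_1$ stays an unlink so the $3$- and $4$-handles can be capped off without Piergallini's cusp-and-node stabilization. But there is a genuine gap at the point where you write ``Once this compatibility is established'': the actual mechanism is never identified. The double is not mirror-symmetric in the $[0,1]$ direction of $\partial(D^2\times D^2)\times[0,1]$ in any way you can use --- the $3$-handles and the $4$-handle coming from $\overline U$ do not ``undo'' the $1$- and $0$-handles of $U$ by any cancellation in the branch locus; they are precisely the handles you still have to cap off at the end, which is why you need $\widetilde B_1$ to be an unlink in the first place. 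All of the content lives in the $2$-handles, and the concrete fact that makes the argument work is this: in the doubled handle structure each $2$-handle $\sigma$ of $U$ is paired with a $2$-handle $\sigma'$ of $\overline U$ attached along a $0$-framed meridian of the attaching circle of $\sigma$, equivalently along the belt sphere of $\sigma$. Since that belt sphere is invariant under the covering involution $\tau_2$ used to extend $\widehat h$ over $\sigma$, the band $\beta'$ that $\sigma'$ contributes to the branch locus cancels the band $\beta$ contributed by $\sigma$, so each dual pair of $2$-handles has no net effect on $\widetilde B_1$. Combined with the fact that each $1$-handle only adds a split unknot component, this is what forces $\widetilde B_1$ to remain an unlink.

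Relatedly, your identification of orientability as ``the hard part,'' to be reconciled with the unlink condition by applying half-twist corrections in mirror pairs, misplaces the difficulty. The dual band $\beta'$ is $0$-framed, hence untwisted and attached orientably, and the cancellation of $\beta$ by $\beta'$ is local and pairwise; no global mirror-symmetric choice of branch data is needed. As written, your argument substitutes an expectation that symmetry will resolve two competing constraints for the one concrete verification the theorem actually requires, namely the belt-sphere pairing of dual $2$-handles and the resulting band cancellation, so the key step remains unproven.
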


\begin{proof}

If $F=S^2$ is a trivially embedded sphere in the 0-handle of $X$, we can construct a concave fibration of $\nu F$ over $S^2$ as in Section~\ref{subsec:GayandKirby}. Let $Y = X \backslash \nu F$, and let $\lambda:\partial Y \rightarrow D^2$ be the induced open book decomposition.  By \cite{GayandKirby} the monodromy of $\lambda$ is trivial, and hence it factors through a simple branched covering $h : \partial Y \rightarrow \partial (D^2 \times D^2)$ of degree $\geq$ 3, whose branch locus is a trivial closed braid in $D^2 \times \partial D^2$. 

We now proceed to extend the covering $h$ to a covering 
\[
\tilde{h}:Y_2\rightarrow \partial (D^2 \times D^2)\times [0,1]
\]  
with branch locus $\widetilde{B}$.  Again we let $\widetilde{B}_1$ be the intersection of $\widetilde{B}$ with $\partial (D^2 \times D^2) \times \{1\}$.  Each 1-handle we extend over contributes an unknot component to $\widetilde{B}_1$ which is unlinked from the other components.

Before extending $h$ across the 2-handles, note that in the induced handle structure on $Y$, the 2-handles occur pairs.  Every 2-handle $\sigma$ from $U$ is paired with a 2-handle $\sigma'$ from $\overline{U}$, where $\sigma'$ is attached along a 0-framed meridian of the attaching circle of $\sigma$ (see \cite{GompfandStipsicz}).  We can also think of $\sigma'$ as being attached along the belt sphere of $\sigma$.  

Extending $h$ over a 2-handle from $U$ changes $\widetilde{B}_1$ by oriented surgery along a band $\beta$.  On the other hand, since the belt sphere of $\sigma$ is symmetric with respect to the involution $\tau_2: \sigma \rightarrow \sigma$, extending $h$ across $\sigma'$ will change $\widetilde{B}_1$ by oriented surgery along a band $\beta'$ which cancels $\beta$.  Hence the net effect of extending $h$ across $\sigma$ and $\sigma'$ does not change $\widetilde{B}_1$, which thus remains an unlink.   
\end{proof}

Any orientable $S^2$-bundle over a (possibly nonorientable) surface $\Sigma$ is the double of a $D^2$-bundle over $\Sigma$.  See \cite{GayandKirby} for an alternate construction of BLFs on doubles of 2-handlebodies.  

\subsection{Connected sums} The procedure outlined in Section~\ref{subsec:ConvexPiece} also respects connected sums in the following sense:

\begin{proposition}
Suppose that $X_1$ and $X_2$ are two handlebodies for which the procedure in Section~\ref{subsec:ConvexPiece} yields BLFs $f_1:X_1 \rightarrow S^2$ and $f_2: X_2 \rightarrow S^2$.  Then the same procedure can be used to obtain a BLF $f:X_1 \# X_2\rightarrow S^2$ which restricts to a concave fibration on $X_1 \backslash D^4 \subset X_1 \# X_2$ and to a convex fibration on $X_2 \backslash D^4 \subset X_1 \# X_2$.  Moreover, the ball $D^4 \subset X_1$ can be chosen so that $f|_{X_1\backslash D^4}= f_1|_{X_1\backslash D^4}$.
\end{proposition}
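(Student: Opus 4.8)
The plan is to run the procedure of Section~\ref{subsec:ConvexPiece} directly on $X_1 \# X_2$, arranging the connected sum so that it is invisible to the concave piece contributed by $X_1$ and is absorbed entirely into the convex piece. Let $\nu F \subset X_1$ be the neighborhood of the sphere used to build $f_1$. I would form the connected sum by deleting from $X_1$ a trivial product ball $D^4$ lying over a regular value of $f_1$, inside $f_1$'s convex region and disjoint from $\nu F$, and deleting the $0$-handle of $X_2$. Since the concave fibration on $\nu F$ and the open book $\lambda$ it induces on $\partial(\nu F)$ are then untouched, the resulting $f$ will agree with $f_1$ on $X_1 \setminus D^4$, giving the final clause. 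Moreover, deleting a trivial product ball from the closed broken Lefschetz fibration $f_1$ leaves a concave fibration on $X_1 \setminus D^4$: the fibers over the deleted disk acquire boundary while all remaining fibers stay closed, and $f_1$ restricts on $\partial(X_1 \setminus D^4) = S^3$ to the standard open book. This would establish the concave half of the statement.

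It would remain to build a convex fibration on $W := (X_1 \# X_2)\setminus \nu F$ that restricts to $f_1$'s convex piece on $X_1 \setminus (\nu F \cup D^4)$, and hence still matches $\lambda$ along $\partial(\nu F)$. Following Section~\ref{subsec:ConvexPiece} I would present $W$ by the handles of $X_1 \setminus \nu F$ followed by the handles of $X_2$ (the $0$-handle of $X_2$ being the ball deleted from that summand), and extend the covering $\widehat{h}$ producing $f_1$'s convex piece over the new $X_2$-handles. After stabilizing the two coverings to a common degree $n \geq 3$ by adjoining trivial sheets, the extension over the $X_1$-handles would reproduce the branch surface of $f_1$, while the extension over the $X_2$-handles would reproduce the branch surface used in building $f_2$: the convex piece of $f_2$ supplies extendable symmetric handle data for the $2$-handlebody part of $X_2$, and the concave piece of $f_2$ is a standard $S^2 \times D^2$ whose handles extend over $\widehat{h}$ in the evident orientable fashion. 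Because the connected-sum ball lies over a regular value, these two extensions would take place in disjoint balls of $\partial(D^2 \times D^2)\times[0,1]$.

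The main obstacle is to check that the two properties the construction depends on survive the merge. I would verify (i) that the attaching maps of the $X_2$-handles can be made symmetric with respect to $\widehat{h}$ within the connected-sum ball without disturbing the already-symmetric $X_1$-handles; (ii) that the half-twist corrections of Figure~\ref{fig:OrientableMove} can still be applied to the $X_2$-bands, so that the total branch locus $\widetilde{B}$ remains an embedded orientable surface; and (iii) that the top slice is the split union $\widetilde{B}_1 = \widetilde{B}_1^{(1)} \sqcup \widetilde{B}_1^{(2)}$ of the contributions from the $X_1$- and $X_2$-handles, each an unlink (the first by the construction of $f_1$, the second because the concave piece of $f_2$ is a standard $S^2 \times D^2$ and its convex piece is handled by the construction of $f_2$), whence $\widetilde{B}_1$ is again an unlink. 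Granting (iii), the covering extends over the common $3$- and $4$-handles exactly as in Section~\ref{subsec:ConvexPiece}, and Propositions~\ref{prop:BranchedCoveringToBALF} and~\ref{prop:antiLefschetzReplacement} upgrade $\mathrm{pr}_2 \circ H$ to a broken Lefschetz fibration on $W$; gluing this to the concave piece $\nu F$ would yield $f : X_1 \# X_2 \to S^2$. The portion of the convex piece built from the $X_2$-handles is exactly $X_2 \setminus D^4$, and it carries a convex broken fibration whose boundary open book is standard: the disk pages seen along $S^3$ are reconciled with the topology of $X_2$ by the round $1$-handle singularities inherited from $f_2$, across which the fiber genus grows as one moves inward. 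The delicate point throughout will be (ii)--(iii); the split-union conclusion is available precisely because performing the connected sum over a regular value localizes the two extensions in disjoint balls.
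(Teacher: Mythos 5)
Your proposal follows essentially the same route as the paper: decompose $X_1 \# X_2$ as the $0$- through $3$-handles of $X_1$ followed by the $1$- through $4$-handles of $X_2$, keep the concave piece on $\nu S^2$ and the branched covering built from $X_1$'s handles, and continue extending that single covering across the handles of $X_2$; the deleted ball in $X_1$ is precisely its $4$-handle, over which the completed fibration is a trivial $D^2$-bundle over a small disk, so the clause $f|_{X_1\backslash D^4}=f_1|_{X_1\backslash D^4}$ comes for free. The only real divergence is your suggestion that one stabilizes ``two coverings'' to a common degree and that the $X_2$-extension ``reproduces the branch surface used in building $f_2$'': in the paper there is only ever one covering, namely $h'$ on $X_1\backslash(\nu S^2 \cup 4\text{-handle})$, whose branch locus is braided rel boundary (via Theorem~\ref{thm:braidedsurfaceswithcaps}) inside $(D^2\times D^2)\backslash(D'\times D')$ to produce the concave fibration on $X_1\backslash D^4$, and the hypothesis that the procedure succeeds for $X_2$ is what guarantees the symmetrization, orientability, and unlink conditions can be met for its handles --- not any literal reuse of $f_2$'s branch surface, whose construction starts from different boundary data.
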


\begin{proof}
The handle structures on $X_1$ and $X_2$ yield a handle decomposition of $X_1 \# X_2$ by starting with the 0-handle of $X_1$ and attaching all 1,2 and 3-handles of $X_1$, followed by the 1, 2, 3 and 4-handles of $X_2$.

Cut out a neighborhood of an $S^2$ from $X_1$, and construct the concave fibration on $\nu S^2$ and the branched covering $h$ as above.  The map $h$ can be extended across the 1,2 and 3-handles of $X_1$ to give a covering 
\[
h' : X_1\backslash (\nu S^2 \cup 4\text{-handle}) \rightarrow \partial (D^2 \times D^2)\times [0,1].  
\]
We identify $\partial (D^2 \times D^2) \times [0,1]$ with $(D^2 \times D^2) \backslash (D'\times D')$, where $D' \subset D^2$ is a small disk containing the origin.  Then by \cite{Hughes2015} the branch locus $B'$ of $h'$ can be braided rel $\partial B'$ so that it is a braided surface with caps in $(D^2 \times D^2) \backslash (D'\times D')$.  Gluing the map $\text{pr}_2 \circ h'$ to the concave fibration on $\nu S^2$ gives a concave fibration $X_1 \backslash 4\text{-handle} \rightarrow S^2$.  This fibration can either be continued across the 4-handle of $X_1$ to obtain the fibration $f_1:X_1 \rightarrow S^2$, or across the 1, 2, 3 and 4-handles of $X_2$ to give a fibration $f:X_1 \# X_2 \rightarrow S^2$.
\end{proof}

\section{Examples}
\label{sec:Examples}

In this section we compute a few simple examples, to illustrate how the above procedure is carried out.

\subsection{BLF on $S^4$}
\begin{figure}
 \centering
 \includegraphics[width=0.3\textwidth]{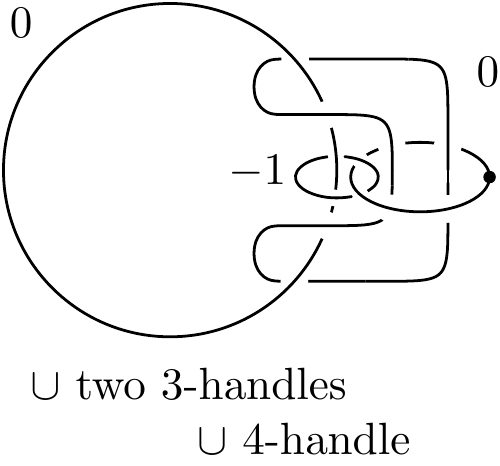}
  \caption[Handle structure on $S^2 \times D^2 \subset S^4$]{Handlebody structure of a neighborhood of $S^2$ in $S^4$}
  \label{fig:FourSphereDiagram}
\end{figure}
Consider the diagram of $S^4$ in Figure~\ref{fig:FourSphereDiagram}.  As in Figure~\ref{fig:ADK}, the union of all 0, 1, and 2-handles in this decomposition gives a neighborhood of an unknotted $S^2 \subset S^4$, together with an additional round 1-handle and (ordinary) 2-handle attached.  Call the union of these handles $X_1$, and set $X_2 = S^4 \backslash X_1$.  The open book decomposition on $\partial X_1 = \partial X_2$ induced by the concave fibration $f_1 : X_1 \rightarrow S^2$ from the above proof will have a punctured torus page with trivial monodromy (see \cite{GayandKirby}).  Hence it can be represented by a 3-fold simple branched covering $h:\partial X_2 \rightarrow \partial (D^2 \times D^2)$, and whose branch locus in $\partial (D^2 \times D^2)$ is the closure of the trivial 4-strand braid in $D^2 \times \partial D^2$ ($h$ can be described on each page by the branched covering in Figure~\ref{fig:3foldcover}).  

The branched covering $h$ extends to a covering $H:X_2 \rightarrow D^4$, which is built by turning the handle decomposition from Figure~\ref{fig:FourSphereDiagram} upside-down, and viewing $X_2$ as a 0-handle with two 1-handles attached.  The 0-handle can be expressed as a 3-fold covering of $D^4$ branched over two properly embedded unknotted disks.  For each 1-handle we extend this covering over, a properly embedded unknotted disk is added to the branch locus.  Hence the branch locus $B_H$ of $H$ in $D^4 \cong D^2 \times D^2$ is isotopic to the braided surface $\{p_1,\ldots , p_4\} \times D^2$, for some collection of disjoint points $\{p_1,\ldots , p_4\} \subset D^2$.  The only critical points in the resulting broken Lefschetz fibration $f:S^4 \rightarrow S^2$ will thus lie along round 1-handle singularity in $X_1$, and we recover Auroux, Donaldson, and Katzarkov's example in \cite{AurouxDonaldsonKatzarkov}.

\begin{figure}
 \centering
 \includegraphics[width=\textwidth]{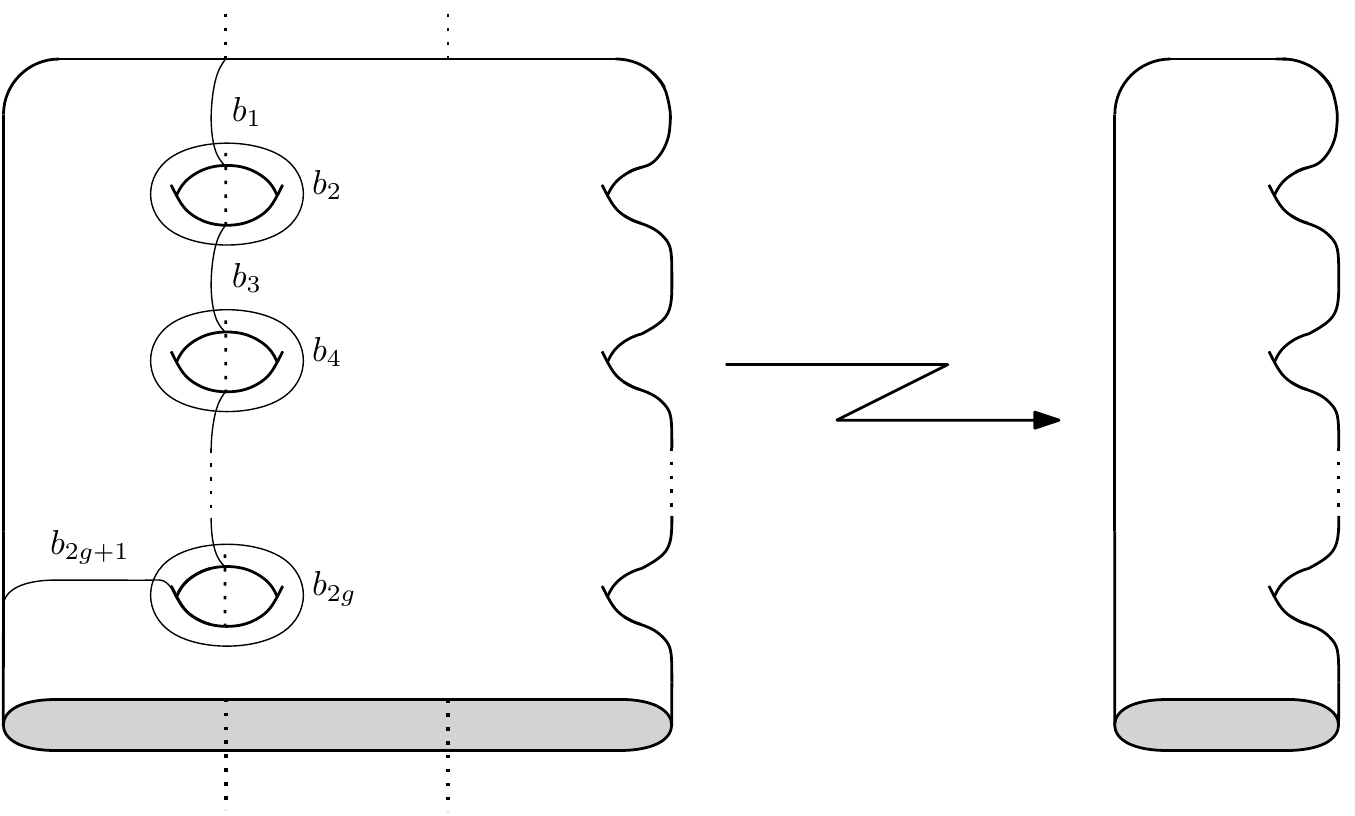}
  \caption{3-fold branched cover $\Sigma \rightarrow D^2$}
  \label{fig:3foldcover}
\end{figure}

\subsection{$S^2$-bundles over orientable surfaces}  Let $X$ be an $S^2$-bundle over an closed orientable surface of genus $g$.  For simplicity, we consider first the case when $g=1$.  Consider the diagram of $X$ in Figure~\ref{fig:S2bundle}, where each 1-handle attaching sphere is paired with the sphere directly across from it.  Notice that the diffeomorphism type of $X$ depends only on the parity of $n$.  Assume first that $n=0$.  In this case $X \cong S^2 \times T^2$.  While there is an obvious fibration $S^2 \times T^2 \rightarrow S^2$, the construction below has the advantage that it can be iterated to construct BLFs on connect sums of $S^2$-bundles, and generalizes to the twisted bundle $S^2 \tilde{\times} T^2$.

\begin{figure}
 \centering
 \includegraphics[width=0.8\textwidth]{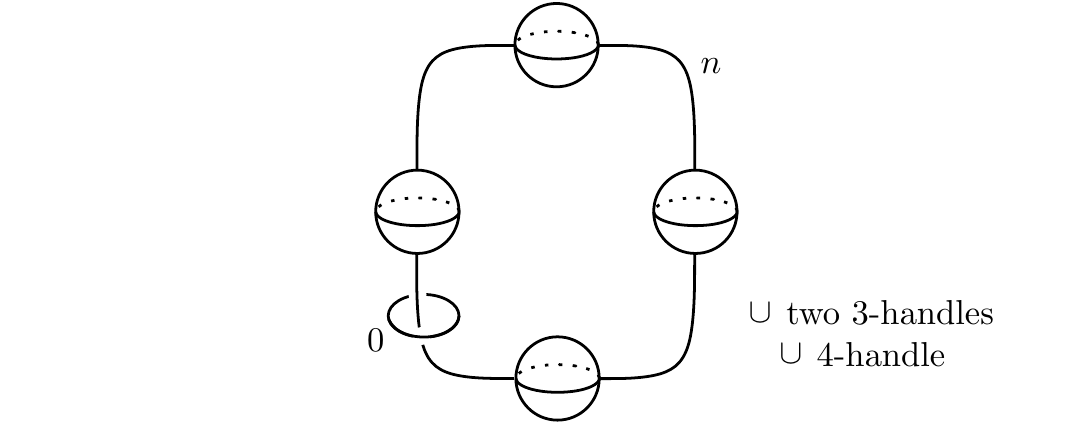}
  \caption{$S^2$-bundle over torus}
  \label{fig:S2bundle}
\end{figure}

We begin by adding a copy of the diagram in Figure~\ref{fig:FourSphereDiagram} (minus the 4-handle) to the diagram of $X$, which does not change the diffeomorphism type of $X$.  Again, let $X_1$ denote the union of the 0-handle with the newly added 1-handle and 2-handles, and let $X_2 =  X \backslash X_1$.  As above, $X_1$ admits a concave fibration over $S^2$ and induces an OBD on $\partial X_2$ with punctured torus page and trivial monodromy.  The associated 3-fold branched covering 
\[
h:\partial X_2 \rightarrow \partial (D^2 \times D^2)
\]
has branch locus a trivial 4-strand closed braid in $D^2 \times \partial  D^2$.  We need to extend $h$ over the handles in Figure~\ref{fig:S2bundle}, as well as the additional 3-handles we introduced when adding the diagram in Figure~\ref{fig:FourSphereDiagram}.  

In $\partial X_2$ there are four circles of branch points, corresponding to the four componends of the branch locus in $\partial (D^2 \times D^2)$.  We can isotope the handle attaching maps so that one of these four circles $C$ skewers the diagram in Figure~\ref{fig:S2bundle}, so that locally the covering looks like rotation of $\pi$ about the center of the diagram.  We first focus on extending the covering over the 1-handles $\sigma_1$ and $\sigma'_1$, and over the 2-handle $\sigma_2$ coming from the handle structure on $T^2$.

\begin{figure}
 \centering
 \includegraphics[width=0.38\textwidth]{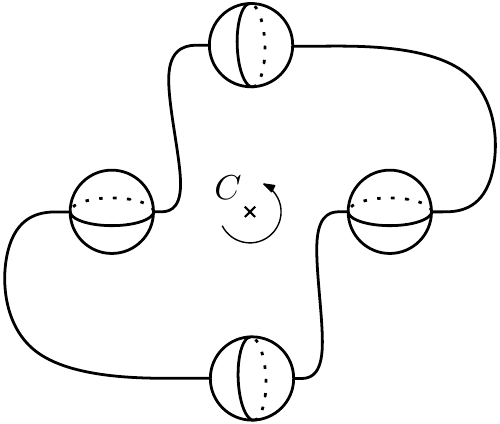}
  \caption{Symmetrizing the handles}
  \label{fig:RotationHandles}
\end{figure}

Isotope the attaching maps of these handles so that they are symmetric with respect to rotation by $\pi$ around $C$, as in Figure~\ref{fig:RotationHandles}.  We can thus extend the covering $\tilde{h}$ over $\sigma_1,\sigma'_1$, and $\sigma_2$.  Extending over the 1-handles adds a pair of disks to the branch locus, while extending over the 2-handle adds a band.  Notice that when the attaching circle of $\sigma_2$ runs along the horizontal 1-handle $\sigma_1$, it will intersect the branch set in precisely two points.  The branch $\widetilde{B}$ locus of
\[
\tilde{h}:(\partial X_2 \times [0,1])\cup \sigma_1\cup \sigma'_1\cup\sigma_2 \rightarrow \partial (D^2 \times D^2)\times [0,1]
\]
will be as in Figure~\ref{fig:FilmBranchLocus3}.

\begin{figure}
 \centering
 \includegraphics[width=0.75\textwidth]{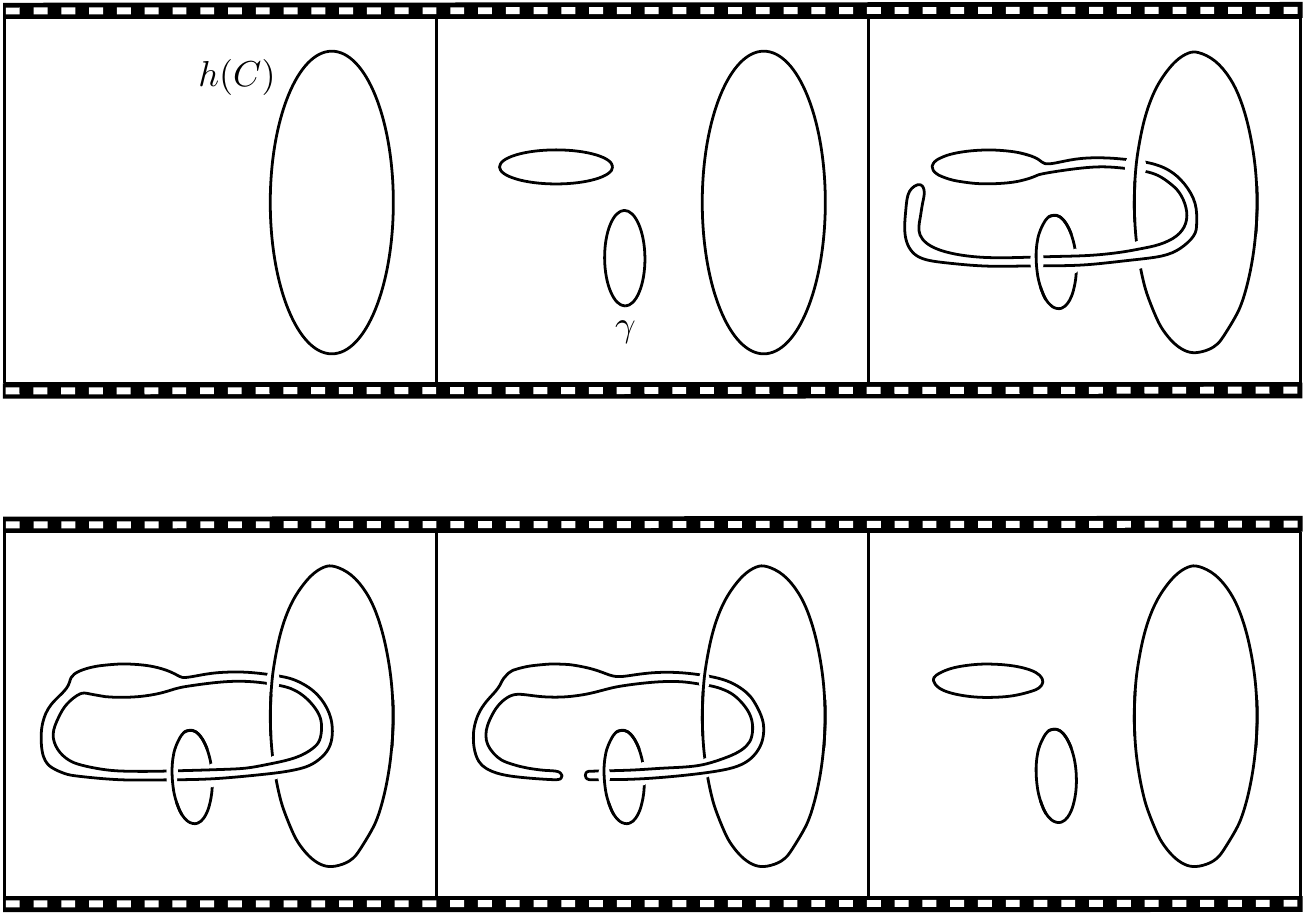}
  \caption{Branch locus $\widetilde{B}$ after extending over 1-handles and $\sigma_2$}
  \label{fig:FilmBranchLocus3}
\end{figure}

More precisely, let $\widetilde{B}_t = \widetilde{B} \cap ( \partial (D^2 \times D^2) \times \{t\})$ for $t \in [0,1]$.  The left-most frame represents $\widetilde{B}_0$, the branch locus of $h$, where we have suppressed all of the components except for $h(C)$.  As $t$ increases, we see two unknotted components appear, corresponding to the 1-handles $\sigma_1$ and $\sigma'_1$, followed by a band surgery corresponding to the 2-handle $\sigma_2$.  Extending $\tilde{h}$ across the remaining 2-handle in Figure~\ref{fig:S2bundle} results in an additional band surgery which cancels the first.  Note that all of the components in Figure~\ref{fig:FilmBranchLocus3} will have the same monodromy as $h(C)$.   

The branch locus $\widetilde{B}_1 = \widetilde{B} \cap (\partial (D^2 \times D^2) \times \{1\})$ is thus a six component unlink (three components from Figure~\ref{fig:FilmBranchLocus3} and three additional components from $h: \partial X_2 \rightarrow \partial (D^2 \times D^2)$ which were suppressed from the diagrams).  It only remains to extend this covering over the four 3-handles and unique 4-handle of $X_2$.  It is not hard to see that the union of these higher index handles admits a 3-fold simple branched covering over $D^4$, with branch locus consisting of six disjoint properly embedded disks in $D^4$.  This covering can thus be glued to $\tilde{h}$ to give a covering $H:X_2 \rightarrow D^2 \times D^2$, where these six disks cap off the six component unlink $\widetilde{B}_1$.  Let $B_H$ denote the branch locus of $H$, which consists of $\widetilde{B}$ capped off with these six disks.   

Finally, in order to apply Proposition~\ref{prop:BranchedCoveringToBALF} we must arrange $B_H$ as a braided surface with caps.  By \cite{Hughes2015} this is equivalent to arranging $B_H\subset D^2 \times D^2$ so that it sits in a collar neighborhood $\partial (D^2 \times D^2) \times [0,1]$ such that
\begin{enumerate}
\item the restriction to $B_H$ of the projection $\rho:\partial (D^2 \times D^2) \times [0,1]\rightarrow [0,1]$ is a Morse function, and 
\item $(\rho|_{B_H})^{-1}(t)$ a closed braid in $\partial(D^2 \times D^2) \times \{t\}$ for all regular values $t$.
\end{enumerate}

Figure~\ref{fig:BraidedFilmBranchLocus} shows how this can be done.  Again we start with the component $h(C)$ (hiding the three other components), and introduce two new unknots corresponding to extending the branched covering over the 1-handles.  The key difference now is that at every regular level the branch locus must be a closed braid.  Hence, the band corresponding to $\sigma_2$ now shows up first as a maximal point, which is then completed by adding two half-twisted bands via saddle points in the seventh frame.  The second band surgery takes place in the ninth frame.  Finally the branch locus is simplified to the trivial 3-strand braid, which is capped of by three minimal points (the other unseen three unknot components are similarly capped off).

\begin{figure}
 \centering
 \includegraphics[width=\textwidth]{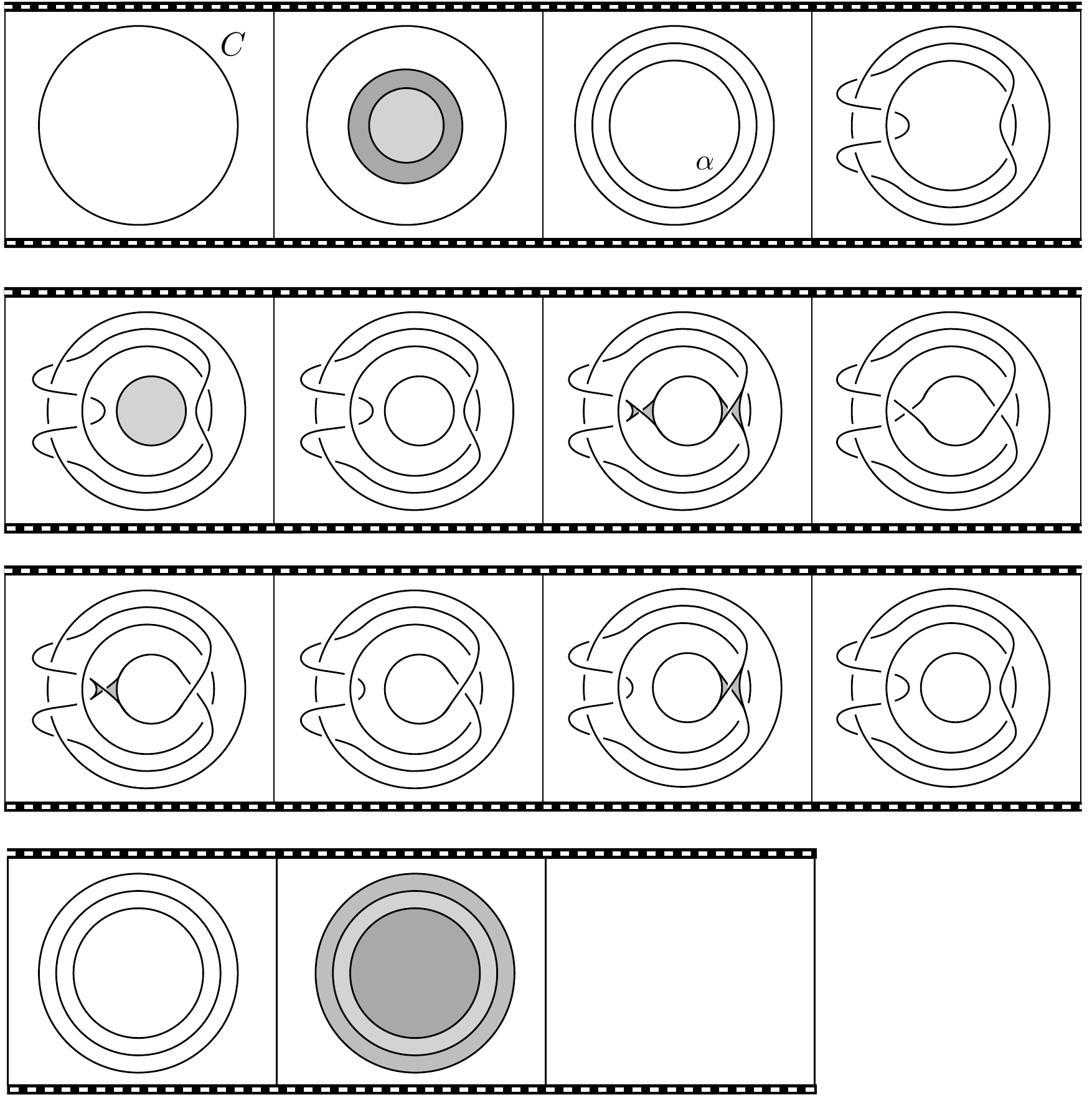}
  \caption{Branch locus $B_H$ as a braided surface with caps}
  \label{fig:BraidedFilmBranchLocus}
\end{figure}

The resulting fibration $\text{pr}_2 \circ H : X_2 \rightarrow D^2$ has a round 1-handle singularity for each maximal point of $B_H$ (which shows up along the boundary of the maximal disk), and a Lefschetz or anti-Lefschetz critical point for each saddle point.  Hence $\text{pr}_2 \circ H$ has three round 1-handle singularities, two Lefschetz critical points, and two anti-Lefschetz critical points.  The anti-Lefschetz critical points can replaced by Proposition~\ref{prop:antiLefschetzReplacement}, and the monodromy information of the fibration can be read off of Figure~\ref{fig:BraidedFilmBranchLocus}.

Now suppose that $X$ is the $S^2$-bundle over $T^2$ given by Figure~\ref{fig:S2bundle} with $n=1$, i.e. $X \cong S^2 \tilde{\times} T^2$.  Then the branch locus $\widetilde{B}$ will be as in Figure~\ref{fig:FilmBranchLocus3}, except that the band corresponding to $\sigma_2$ will have a single half-twist, and hence $\widetilde{B}$ will be nonorientable.  This can be remedied by involving another component of the branch locus $h:\partial X_2 \rightarrow \partial (D^2 \times D^2)$, and performing a move as in Figure~\ref{fig:OrientableMove} (see Figure~\ref{fig:FilmBranchLocus4}, where the monodromy information must be chosen to agree with the labels in Figure~\ref{fig:OrientableMove}).

\begin{figure}
 \centering
 \includegraphics[width=0.77\textwidth]{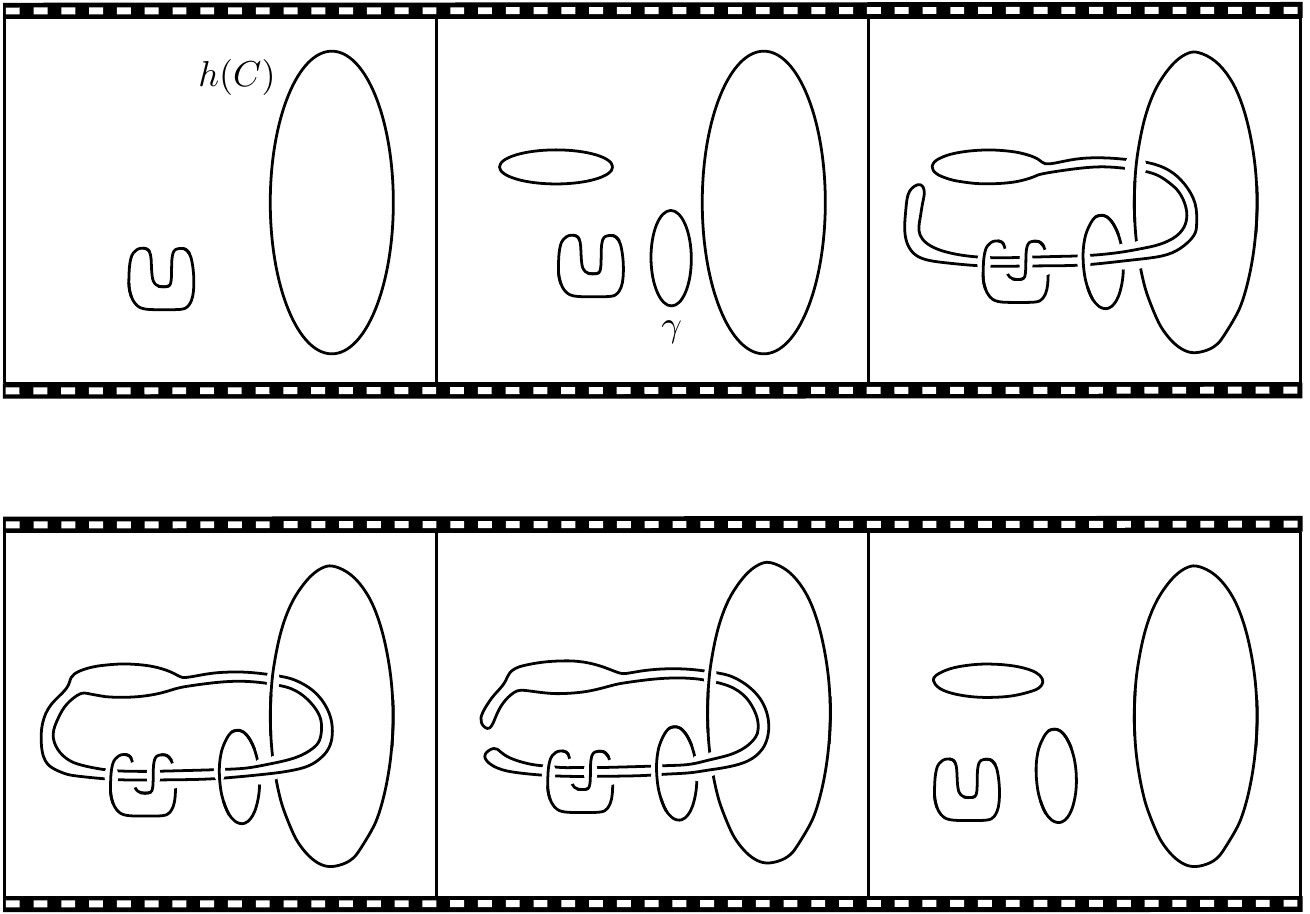}
  \caption{Branch locus $\widetilde{B}$ for $S^2 \tilde{\times} T^2$}
  \label{fig:FilmBranchLocus4}
\end{figure}

When braided, this move introduces a new local maximal point, and two new saddle points (one of each sign).  Hence the resulting broken fibration has an additional round 1-handle singularity, Lefschetz critical point, and anti-Lefschetz critical point when compared to the fibration constructed on $S^2 \times T^2$.

If $X$ is a $S^2$-bundle over a higher surface of genus $g>1$, we can start instead with the diagram in Figure~\ref{fig:S2bundleGenus}.  The associated branch locus will be as in Figure~\ref{fig:BraidedFilmBranchLocus}, except that the innermost strand $\alpha$ will be replaced by $2g-1$ parallel strands, and hence the fibration $\text{pr}_2 \circ H:X_2 \rightarrow D^2$ will now have $2g+1$ round 1-handle singularities.

\begin{figure}
 \centering
 \includegraphics[width=0.9\textwidth]{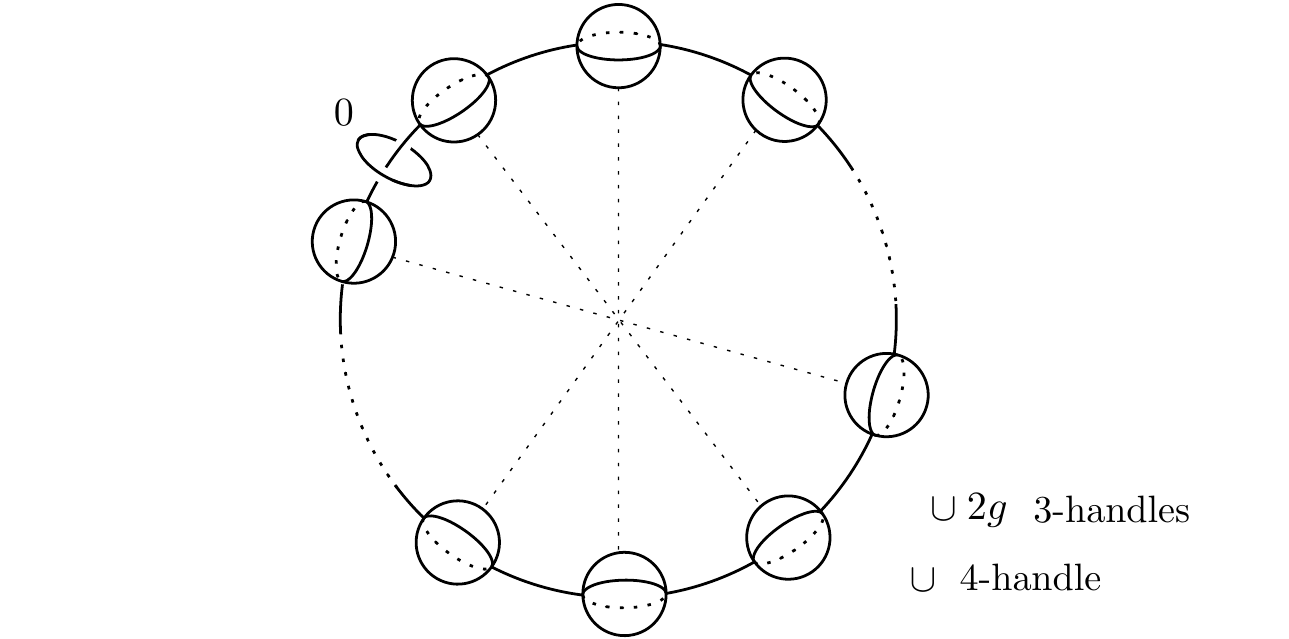}
  \caption{$S^2$-bundle over genus $g$ surface}
  \label{fig:S2bundleGenus}
\end{figure}

\subsection{$S^2$-bundles over $\mathbb{RP}^2$} We now consider $S^2$-bundles over $\mathbb{RP}^2$, which can be described by the diagram in Figure~\ref{fig:S2Nonorientable}.  Proceeding as above, we can arrange the component $C$ of the branch set so that it sits vertically in the diagram between the two strands of the attaching circle of the $n$-framed 2-handle $\sigma_2$, and so that the attaching maps of $\sigma_2$ and the 1-handle $\sigma_1$ are symmetric with respect to rotation about $C$.  For $n=0$ and $n=1$ the branch locus $\widetilde{B}$ will be as in Figures~\ref{fig:FilmBranchLocus3} and \ref{fig:FilmBranchLocus4} respectively, except that the second unknot components (labelled by $\gamma$ and corresponding to the extra 1-handle) will not be present.  After filling in the higher index handles and braiding the resulting branch locus $B_H$, the result will be the same as in Figure~\ref{fig:BraidedFilmBranchLocus}, except that in the second still only the outermost new component will appear.

\begin{figure}
 \centering
 \includegraphics[width=0.8\textwidth]{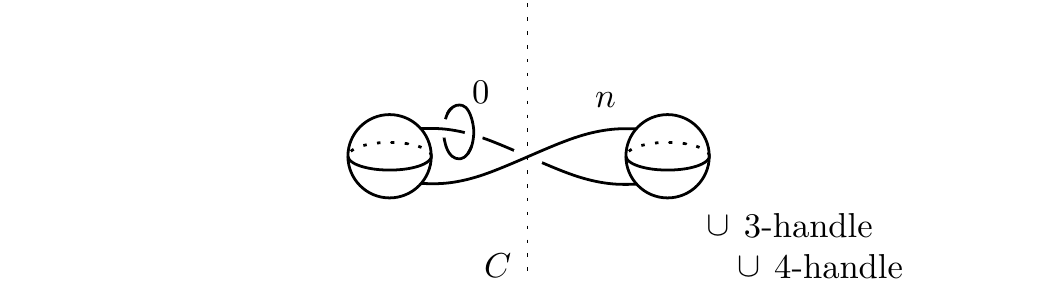}
  \caption{$S^2$-bundle over $\mathbb{RP}^2$}
  \label{fig:S2Nonorientable}
\end{figure}

\subsection{Connected sums} The above constructions can be repeated to give BLFs on connected sums.  For example, instead of capping off the unknot components in the third to last still of Figure~\ref{fig:BraidedFilmBranchLocus}, the movie (or another similar braided movie) could be repeated.

\bibliographystyle{plain}
\bibliography{bibliography}
\end{document}